\documentclass[a4paper,11pt]{article}

\usepackage[utf8]{inputenc}
\usepackage[T1]{fontenc}
\usepackage{lmodern}
\usepackage[ngerman,english]{babel}
\usepackage{amsmath, amsfonts, amssymb, amsthm, mathtools, dsfont, tikz, booktabs, mathrsfs, subcaption}
\usepackage{hyperref}
\usepackage[noabbrev]{cleveref}
\usepackage{aliascnt}

\usetikzlibrary{arrows.meta,calc}

\usepackage[left=2.5cm,right=2.5cm,top=2.5cm,bottom=2.5cm]{geometry}

\allowdisplaybreaks[2]

\makeatletter
\let\@fnsymbol\@alph
\makeatother

\definecolor{OSNAred}{HTML}{AD1034}
\definecolor{OSNAgold}{HTML}{FBB900}
\definecolor{OSNAgray}{HTML}{CFD0D1}
\definecolor{RUBblue}{HTML}{003560}  
\definecolor{RUBgreen}{HTML}{94C11C} 
\definecolor{RUBgray}{HTML}{E6E4E4}  

\theoremstyle{plain}

\newtheorem{thm}{Theorem}[section]

    \newaliascnt{lem}{thm}
\newtheorem{lemma}[lem]{Lemma}
    \aliascntresetthe{lem}

\newtheorem{proposition}[thm]{Proposition}

\newtheorem{rem}[thm]{Remark}

\crefname{thm}{theorem}{theorems}
\crefname{lem}{lemma}{lemmas}

\usepackage{enumitem}

\setlist[enumerate]{label=(\alph*)}

\def\indicator{\mathbf{1}}
\def\ee{\mathbf{e}}
\def\conv{\operatorname{conv}}
\def\D{\operatorname{D}}

\def\Cosmological{C}

\def\PP{\mathbb{P}}
\def\EE{\mathbb{E}}

    \def\VV{\mathbb{V}}
    \def\V{\mathbb{V}}
\def\Cov{\mathrm{Cov}}

\def\RR{\mathbb{R}}
\def\NN{\mathbb{N}}

\title{Central limit theorems for high dimensional lattice polytopes: cosmological polytopes} 
\author{
Torben Donzelmann\thanks{Universit\"at Osnabr\"uck, Germany. \url{torben.donzelmann@uos.de}}
\and
Martina Juhnke\thanks{Universit\"at Osnabr\"uck, Germany. \url{martina.juhnke@uos.de}}
\and
Benedikt Redno\ss\thanks{Ruhr University Bochum, Germany. \url{benedikt.rednoss@rub.de}}
\and
Christoph Th\"ale\thanks{Ruhr University Bochum, Germany. \url{christoph.thaele@rub.de}}
}
\date{}

\usepackage{todonotes}
\reversemarginpar
\begin{document}

\maketitle

\begin{abstract}
\noindent
We study cosmological polytopes induced by Erd\H{o}s--R\'enyi random graphs in a high-dimen\-sional regime. These graph-based lattice polytopes form a natural model of random lattice polytopes in which geometric features are determined by the structure of the underlying random graph. Focusing on the number of polytope edges and on the number of edges in unimodular triangulations, we derive asymptotic formulas for expectations and variances and prove quantitative central limit theorems in the relevant parameter regime. The analysis relies on explicit graph-theoretic descriptions of the corresponding edge sets and on the discrete Malliavin--Stein method for normal approximation. \\[0.5cm]
\noindent\textbf{Keywords:} {Central limit theorem, cosmological polytope, discrete Malliavin--Stein method, edge count, Erd\H{o}s--R\'enyi random graph, random lattice polytope, triangulation}\\
\noindent\textbf{MSC (2020):} Primary 52B20; Secondary 52B05, 05C80, 60D05, 60F05.
\end{abstract}

\section{Introduction}

Graph polytopes provide a natural bridge between discrete geometry and graph theory: a combinatorial object is encoded by a lattice polytope whose facial structure reflects structural properties of the underlying graph. Over the past years, several such constructions, including graphical zonotopes, matching polytopes, flow polytopes and root polytopes, have attracted increasing attention, both because of their intrinsic geometric interest and because they connect to topics ranging from commutative algebra to mathematical physics, see \cite{Ferroni} and references therein.  Among the most prominent examples are symmetric edge polytopes and cosmological polytopes. Although their definitions are elementary, they give rise to surprisingly rich geometric and combinatorial behavior.

The present paper is concerned with \emph{cosmological polytopes}. These are lattice polytopes associated with finite graphs, originally introduced in \cite{CosmologicalIntorudction}. Their origin lies in mathematical physics, where they were introduced as geometric objects encoding contributions of Feynman diagrams to the wavefunction of the universe in certain cosmological models \cite{Cosmo-Benincasa}. Subsequently, their structure has been investigated from various perspectives, ranging from face theory~\cite{kuehne2024facescosmologicalpolytopes} and triangulations~\cite{TriangulationCosmological} to Ehrhart theory~\cite{CosmoMatroid,CosmoEhrhart} and the computation of canonical forms via dual volumes~\cite{CosmoDual}.

From the point of view of convex geometry, cosmological polytopes provide a graph-based analogue of classical point-generated polytopes. While in usual polytope models one starts with a set of points in Euclidean space and studies the convex hull they generate, the input here is a finite graph and the polytope is obtained from its combinatorial structure. This suggests a natural probabilistic question: what can be said about the typical geometry and combinatorics of cosmological polytopes when the underlying graph is random? In this sense, cosmological polytopes form a particularly rigid and tractable model within a broader probabilistic theory of \emph{random graph polytopes}. More precisely, we consider cosmological polytopes generated by Erd\H{o}s--R\'enyi random graphs \(G_{n,p}\), and study the resulting random lattice polytopes in the high-dimensional regime where $p=p(n)$ and \(n\to\infty\). Our main focus lies on the number of edges of the random cosmological polytope and on the number of edges appearing in unimodular triangulations. We show that these quantities admit explicit expectation and variance asymptotics and satisfy central limit theorems in all natural parameter regimes.

This article is a companion to our earlier paper \cite{donzelmann2026centrallimittheoremshigh} on random symmetric edge polytopes. There, we investigated the edge structure of the symmetric edge polytope of \(G_{n,p}\) and of its unimodular triangulations. A key feature of that model was that the relevant edge counts depended on rather delicate local configurations in the graph, such as the presence or absence of short cycles, which led to a comparatively intricate variance analysis and to an exceptional regime in which the leading variance term vanished. For cosmological polytopes, the situation turns out to be markedly different. Although the underlying random graph model is the same, the edge structure of the polytope is governed by a simpler combinatorial principle, and the resulting fluctuation theory is correspondingly more easy to access.

We now recall the definition of the objects studied in this paper, more details can be found in Section \ref{sec:preliminaries} below. Let \(G=(V,E)\) be a finite graph, and write \((\ee_a)_{a\in V\cup E}\) for the standard basis of \(\RR^{V\cup E}\). The \emph{cosmological polytope} associated with $G$ is defined by
\[
    \Cosmological_G
    \coloneqq
    \conv\left\{
        \begin{array}{l}
            \phantom{-}\ee_i+\ee_j-\ee_f,\\
            \phantom{-}\ee_i-\ee_j+\ee_f,\\
            -\ee_i+\ee_j+\ee_f
        \end{array}
        \;\middle|\;
        f=\{i,j\}\in E
    \right\}
    \subset \RR^{V\cup E}.
\]
To introduce randomness, we let \(G\) be an Erd\H{o}s--R\'enyi random graph \(G_{n,p}\) on the node set \([n]=\{1,\dots,n\}\), where each arc is retained independently with probability \(p=p(n)\in(0,1)\), and define
\[
    \Cosmological_{n,p}\coloneqq \Cosmological_{G_{n,p}}.
\]
A for us particularly useful feature of cosmological polytopes is that their edge structure admits a very explicit graph-theoretic description. As a consequence, the number of edges \(f_1(\Cosmological_{n,p})\) of \(\Cosmological_{n,p}\)  can be expressed directly in terms of two elementary random variables of the underlying random graph, namely its number of arcs and its number of leaves, that is, nodes of degree one. This makes the model especially accessible to probabilistic analysis.

Our main results show that this quantity exhibits asymptotically Gaussian fluctuations in the high-dimensional regime where $n\to\infty$ and $p=p(n)$ is at least of order $n^{-2}$. In a simplified form, and under natural assumptions ensuring divergence of the variance, we prove that
\[
    \EE[f_1(\Cosmological_{n,p})]\simeq n^4p^2
    \qquad\text{and}\qquad
    \VV(f_1(\Cosmological_{n,p}))\gtrsim n^6p^3(1-p),
\]
and moreover
\[
    \frac{f_1(\Cosmological_{n,p})-\EE[f_1(\Cosmological_{n,p})]}
    {\sqrt{\VV(f_1(\Cosmological_{n,p}))}}
    \xrightarrow{d} \mathcal{N}(0,1).
\]
We also establish an analogous central limit theorem for the number of edges in unimodular triangulations of \(\Cosmological_{n,p}\). The statements given here are deliberately informal and only intended to illustrate the general nature of the results. The precise assumptions, asymptotic regimes, and quantitative estimates are formulated in full detail throughout the paper. In particular, beyond convergence in distribution, we derive explicit rates of convergence for the normal approximation by means of the discrete Malliavin--Stein method. This also uncovers an interesting contrast with the companion paper \cite{donzelmann2026centrallimittheoremshigh} on symmetric edge polytopes. There, the number of edges in a unimodular triangulation is combinatorially more accessible than the number of polytope edges itself. For cosmological polytopes, the situation is essentially reversed: the number of polytope edges admits a particularly simple representation in terms of basic graph statistics, whereas the corresponding quantity for unimodular triangulations is slightly more involved. Still, both models fit naturally into the same general probabilistic framework.

From a broader perspective, the present work together with the companion paper \cite{donzelmann2026centrallimittheoremshigh} contributes to a probabilistic theory of random graph polytopes. These models differ fundamentally from classical random polytopes generated by random points in Euclidean space. Their geometry is not produced by a cloud of independent points but is encoded by a random discrete structure. This makes them particularly well suited for combining tools from random graph theory, lattice polytope theory, and probability. To the best of our knowledge, the present paper and \cite{donzelmann2026centrallimittheoremshigh} provide the first systematic examples of probabilistic results for high-dimensional random lattice polytopes. 

\smallskip

The remainder of the paper is organized as follows. In \Cref{sec:preliminaries} we collect the necessary background on cosmological polytopes, their triangulations, Erd\H{o}s--R\'enyi random graphs and the normal approximation bound we work with. In \Cref{sec:CosmoEdges} we formulate and develop our main results for the edge count of random cosmological polytopes, whereas Section \ref{sec:CosmoTriangulations} contains our main results for the edge count of their unimodular triangulations.

\section{Preliminaries}\label{sec:preliminaries}
The goal of this section is to recall some background from probability theory and the theory of polytopes.  To avoid text overlap and since the present article is the second one in a sequence of two articles, we decided to keep this section short and to only mention non-standard notions and tools. For further material, we refer the reader to  \cite[Section 2]{donzelmann2026centrallimittheoremshigh}.

\subsection{General notation and random graph notation}

 For two sequences \((a_n)_{n\in\NN}\) and \((b_n)_{n\in\NN}\) of real numbers, we write
\[
\begin{aligned}
a_n \lesssim b_n,
&\qquad\text{if}\qquad
\limsup_{n\to\infty}\left|\frac{a_n}{b_n}\right|<\infty,\\[0.3em]
a_n \simeq b_n,
&\qquad\text{if}\qquad
0<
\liminf_{n\to\infty}\left|\frac{a_n}{b_n}\right|
\leq
\limsup_{n\to\infty}\left|\frac{a_n}{b_n}\right|
<\infty,\\[0.3em]
a_n \gg b_n,
&\qquad\text{if}\qquad
\lim_{n\to\infty}\left|\frac{a_n}{b_n}\right|=\infty.
\end{aligned}
\]
Moreover, for \(m\in\NN\) and \(p\in(0,1)\), we write
$X\sim \mathrm{Bin}(m,p)$ if \(X\) is a binomial random variable with parameters \(m\) and \(p\).

Let \(G=(V,E)\) be a finite graph. Throughout this paper, we refer to the elements of \(V\) as \emph{nodes} and to the elements of \(E\) as \emph{arcs}, in order to distinguish them from the vertices and edges of the associated cosmological polytope introduced in the next subsection. For a node \(v\in V\), we write \(\deg(v)\) for its degree. A node of degree \(1\) is called a \emph{leaf}, and a node of degree at least \(1\) is called \emph{non-isolated}. For \(n\in\NN\) and \(p=p(n)\in(0,1)\), let \(G_{n,p}\) denote the Erd\H{o}s--R\'enyi random graph on node set \([n]:=\{1,\ldots,n\}\), in which each arc is present independently with probability \(p\). We write
\[
E_{n,p}\coloneqq |E(G_{n,p})|
\qquad\text{and}\qquad
L_{n,p}\coloneqq |\{v\in[n] ~|~\deg(v)=1\}|
\]
for the number of arcs and the number of leaves of \(G_{n,p}\), respectively.

\subsection{Cosmological polytopes and their triangulations}\label{sec:prelim_cosmo}

We now recall the deterministic structural results on cosmological polytopes and their triangulations that will be used throughout the paper. Let \(G=(V,E)\) be a finite graph, and let \((\ee_a)_{a\in V\cup E}\) denote the standard basis of \(\RR^{V\cup E}\), indexed by both the nodes and the arcs of \(G\). The associated \emph{cosmological polytope} is defined by
\[
    \Cosmological_G
    \coloneqq
    \conv\left\{
        \begin{array}{l}
            \phantom{-}\ee_i+\ee_j-\ee_f,\\
            \phantom{-}\ee_i-\ee_j+\ee_f,\\
            -\ee_i+\ee_j+\ee_f
        \end{array}
        \;\middle|\;
        f=\{i,j\}\in E
    \right\}
    \subset \RR^{V\cup E}.
\]
Thus, each arc \(f=\{i,j\}\in E\) gives rise to three vertices of \(\Cosmological_G\), which we denote by
\[
t_f\coloneqq \ee_i+\ee_j-\ee_f, \qquad
y_{ij}\coloneqq \ee_i-\ee_j+\ee_f, \qquad
y_{ji}\coloneqq -\ee_i+\ee_j+\ee_f.
\]
%
%
In particular, \(\Cosmological_G\) is a lattice polytope contained in the affine subspace
\[
\left\{
x\in\RR^{V\cup E}:
x_v=0 \ \forall v\in I,\ 
\sum_{v\in V\setminus I} x_v+\sum_{e\in E} x_e =1
\right\}\subset\RR^{V\cup E},
\]
where \(I \subset V\) denotes the set of isolated nodes of \(G\). Consequently, its dimension is $|V|+|E|-1-|I|$. We note that this definition also makes sense for graphs with loops or multiple arcs, although throughout this paper we will only deal with simple graphs.

The combinatorial structure of \(\Cosmological_G\) is governed by an explicit graph-theoretic characterization of its edges, proved in \cite[Corollary 3.2]{kuehne2024facescosmologicalpolytopes}.

\begin{proposition}\label{prop:EdgeCharCosmo}
Let \(G=(V,E)\) be a graph. Two distinct vertices \(\mathbf{x},\mathbf{y}\) of \(\Cosmological_G\) form an edge of \(\Cosmological_G\) if and only if one of the following holds:
\begin{enumerate}
    \item \(\mathbf{x}\) and \(\mathbf{y}\) correspond to different arcs of \(G\);
    \item \(\mathbf{x}\) and \(\mathbf{y}\) correspond to the same arc \(f=\{i,j\}\in E\), and either
    \begin{enumerate}
        \item \(\{\mathbf{x},\mathbf{y}\}=\{y_{ij},y_{ji}\}\), or
        \item \(\{\mathbf{x},\mathbf{y}\}=\{y_{ij},t_f\}\) and \(i\) is a leaf of \(G\).
    \end{enumerate}
\end{enumerate}
\end{proposition}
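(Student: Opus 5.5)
To prove the characterization, I would use the standard edge criterion: a segment $[\mathbf{x},\mathbf{y}]$ between vertices is an edge if and only if its midpoint $\tfrac12(\mathbf{x}+\mathbf{y})$ has no representation as a convex combination of the vertices other than the one supported on $\{\mathbf{x},\mathbf{y}\}$. Equivalently, I can exhibit a supporting linear functional $\ell$ that is maximized exactly at $\mathbf{x}$ and $\mathbf{y}$. Every vertex lies in the hyperplane $\sum_a x_a=1$, so it suffices to find functionals $\ell(z)=\sum_v c_v z_v+\sum_e d_e z_e$ with prescribed values on the $3|E|$ vertices. For $f=\{i,j\}$ these values are
\[
\ell(t_f)=c_i+c_j-d_f,\qquad \ell(y_{ij})=c_i-c_j+d_f,\qquad \ell(y_{ji})=-c_i+c_j+d_f .
\]
The key observation is that the three values always sum to $c_i+c_j+d_f$. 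It follows that $\max(\ell(t_f),\ell(y_{ij}),\ell(y_{ji}))\ge \tfrac13(c_i+c_j+d_f)$.

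The ``if'' direction splits into the listed cases.

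In case (a), take $\mathbf{x}$ coming from $f$ and $\mathbf{y}$ coming from $g\neq f$. I would set $c\equiv 0$ and choose $d$ so that the vertex $\mathbf{x}$ is singled out within arc $f$ and $\mathbf{y}$ within arc $g$. For the other arcs $h$, take $d_h=-1$; then $\ell(t_h)=1$ and $\ell(y)=-1$. This fails because the other $t_h$ would also be maximal. I therefore expect to need nonzero $c_v$ on the endpoints of $f$ and $g$, with large negative weight on all other nodes, to push the other arcs down. The cleanest route is to check that the other arcs' values lie strictly below the common maximum value $M$ attained at $\mathbf{x}$ and $\mathbf{y}$. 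Choosing $c_v=-N$ on all nodes off $f\cup g$ handles any arc with an endpoint outside $f\cup g$: set $d_h$ so that all three values are $<M$, which is possible because a single free parameter $d_h$ plus the $-N$ term controls them. Arcs lying inside $V(f)\cup V(g)$ but different from $f,g$ need a small case analysis depending on whether $f$ and $g$ share a node.

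In case (b)(i), take $c\equiv 0$, $d_f=1$, and $d_h=-1$ together with $c_v$ very negative off $\{i,j\}$ for the other arcs. In case (b)(ii), with $i$ a leaf, take $c_i$ large, $d_f=0$, $c_j=0$. Then $\ell(t_f)=\ell(y_{ij})=c_i$ and $\ell(y_{ji})=-c_i$. No other arc contains $i$, so taking $c_v$ negative elsewhere keeps all other vertices below $c_i$.

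For the ``only if'' direction, I would produce nontrivial convex relations for the remaining pairs, namely $\{t_f,y_{ij}\}$ with $i$ not a leaf. If $i$ has another arc $g=\{i,k\}$, then
\[
t_f+y_{ij}=2\ee_i=y_{ik}+t_g .
\]
So the midpoint of $[t_f,y_{ij}]$ equals the midpoint of $[y_{ik},t_g]$, and the segment is not an edge. The main obstacle I expect is the bookkeeping in case (a) when $f$ and $g$ share a node. There I would first try the explicit midpoint criterion instead, writing $\tfrac12(\mathbf{x}+\mathbf{y})=\sum\lambda_v v$ and comparing coordinates: the $\ee_h$ coordinates force the weight on each arc $h\ne f,g$ to vanish, except for a cancellation that the node coordinates rule out.
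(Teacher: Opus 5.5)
There is a genuine gap, in case (a). The paper gives no proof of its own; it imports the statement from \cite[Corollary~3.2]{kuehne2024facescosmologicalpolytopes}, so your argument has to stand alone.

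Your ``only if'' direction is complete. The only unlisted pairs are $\{t_f,y_{ij}\}$ with $\deg(i)\ge 2$, and $t_f+y_{ij}=2\ee_i=y_{ik}+t_g$ rules them out.

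Case (a) carries $9\binom{|E|}{2}$ of the edges, and you leave it open. The heuristic you offer for it is false. For any arc $h=\{a,b\}$, the same identity $t_h+y_{ab}=2\ee_a$ gives $\ell(t_h)+\ell(y_{ab})=2c_a$, independently of $d_h$ and $c_b$. So neither the free parameter $d_h$ nor a weight $-N$ at $b$ can push $h$ below the level $M$ unless $c_a<M$.

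Your midpoint fallback is misstated as well. A vanishing $\ee_h$-coordinate only yields $\alpha_h=\beta_{ab}+\beta_{ba}$ for the weights on $t_h,y_{ab},y_{ba}$; it does not make them vanish.

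Your recipe for (b)(i) also fails. For $h=\{i,k\}$ with $c_i=0$, $c_k=-N$, $d_h=-1$, one gets $\ell(y_{ik})=N-1>1$. There, simply take $c\equiv 0$, $d_f=1$ and $d_h=0$ for $h\ne f$. For (b)(ii), take $c_i=1$ with all other weights $0$.

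The missing idea is to work with the three pairwise sums
\[
\ell(t_h)+\ell(y_{ab})=2c_a,\qquad \ell(t_h)+\ell(y_{ba})=2c_b,\qquad \ell(y_{ab})+\ell(y_{ba})=2d_h
\]
instead of the triple sum. They give three facts:
\begin{enumerate}
\item An arc $h=\{a,b\}$ can be put strictly below level $M$ by a choice of $d_h$ if and only if $\max(c_a,c_b)<M$; take $d_h\in(c_a+c_b-M,\,M-|c_a-c_b|)$.
\item $t_f$ is the unique maximiser on $f=\{i,j\}$ at level $M$ iff $c_i,c_j<M$ and $d_f=c_i+c_j-M$.
\item $y_{ij}$ is the unique maximiser at level $M$ iff $c_j<c_i<M$ and $d_f=M-c_i+c_j$.
\end{enumerate}

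So case (a) reduces to a small ordering problem. After possibly swapping labels, $\mathbf{x}\in\{t_f,y_{ij}\}$ and $\mathbf{y}\in\{t_g,y_{kl}\}$. You need node weights all below $M$, subject to at most the two strict inequalities $c_j<c_i$ and $c_l<c_k$. These are inconsistent only if $k=j$ and $l=i$, i.e.\ $g=f$, which is excluded for a simple graph. Taking $M=1$ and weights in $\{-2,-1,0\}$ along a topological order finishes the construction. Whether $f$ and $g$ share a node plays no role.

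The same criterion explains (b)(ii). A tie $\ell(t_f)=\ell(y_{ij})=M$ forces $c_i=M$. That is compatible with pushing the other arcs down exactly when no other arc meets $i$.
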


In particular, this shows that the number of edges of \(\Cosmological_G\) is determined by the number of arcs and the number of leaves of the underlying graph $G$. We shall exploit this reduction in the probabilistic analysis below.

We will also investigate the number of edges in unimodular triangulations of cosmological polytopes. Recall that a \emph{unimodular triangulation} of a lattice polytope is a triangulation into lattice simplices of minimal possible volume. By \cite[Corollary 2.10]{TriangulationCosmological}, every cosmological polytope admits such a triangulation. It is easy to see that the only lattice points of $\Cosmological_G$ are its vertices $t_f$, $y_{ij}$, $y_{ji}$ for every arc $f=\{i,j\}$ of $G$ and the standard unit vectors $\ee_f$ and $\ee_i$ for every arc $f$ of $G$ and every non-isolated node $i$ of $G$. Any unimodular triangulation of $\Cosmological_G$ necessarily needs to use all these lattice points as vertices. 
 \cite[Theorem 2.9]{TriangulationCosmological} provides an explicit description of the edges appearing in a suitable unimodular triangulation.

\begin{proposition}\label{prop:EdgeCharCosmoTriang}
Let \(G=(V,E)\) be a graph. Then there exists a unimodular triangulation \(\mathcal{T}\) of \(\Cosmological_G\) such that two distinct vertices \(\mathbf{x},\mathbf{y}\) of \(\mathcal{T}\) form an edge of \(\mathcal{T}\) if and only if one of the following holds:
\begin{enumerate}
    \item \(\mathbf{x}\) and \(\mathbf{y}\) correspond to different arcs of \(G\);
    \item \(\{\mathbf{x},\mathbf{y}\}=\{\ee_f,y_{ij}\}\) for some arc \(f=\{i,j\}\in E\);
    \item \(\mathbf{x}=\ee_i\) for some non-isolated node \(i\in V\), and \(\mathbf{y}\neq y_{ij}\) for every arc \(\{i,j\}\in E\).
\end{enumerate}
\end{proposition}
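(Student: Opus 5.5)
The plan is to construct $\mathcal{T}$ explicitly as a \emph{regular} triangulation of the lattice points of $\Cosmological_G$. The construction rests on the local structure of one arc. For each arc $f=\{i,j\}$ consider the lattice triangle
\[
\Delta_f=\conv\{t_f,y_{ij},y_{ji}\}.
\]
Direct computation gives three midpoint identities:
\[
\tfrac12(y_{ij}+y_{ji})=\ee_f,\qquad \tfrac12(t_f+y_{ij})=\ee_i,\qquad \tfrac12(t_f+y_{ji})=\ee_j .
\]
So $\Delta_f$ is a dilated unimodular triangle whose only further lattice points are the three edge midpoints $\ee_f,\ee_i,\ee_j$. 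All lattice points of $\Cosmological_G$ are of this form, and any unimodular triangulation uses all of them. I would therefore define $\mathcal{T}$ as a pulling (placing) triangulation with a prescribed order:
\begin{itemize}
\item first pull the node vectors $\ee_i$ for the non-isolated nodes $i$;
\item then pull the arc vectors $\ee_f$;
\item finally add the vertices $t_f,y_{ij},y_{ji}$.
\end{itemize}
Equivalently, I would use a height function $\omega$ that is very negative on the $\ee_i$, moderately negative on the $\ee_f$, and $0$ on the vertices of $\Cosmological_G$.

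Unimodularity I would prove by induction on $|E|$, deleting one arc $f$ (and any node that becomes isolated). The facets of $\Cosmological_G$ are described in \cite{kuehne2024facescosmologicalpolytopes}. A pulling triangulation is the cone from the pulled point over the pulling triangulations of the facets not containing it. So it suffices to check two things:
\begin{itemize}
\item every simplex is a cone from some $\ee_i$ or $\ee_f$ over a simplex of a lower-dimensional face;
\item each such face is a cosmological polytope of a smaller graph (e.g.\ a subgraph or contraction), or a product/join of such polytopes with simplices.
\end{itemize}
Since every simplex produced contains lattice points whose pairwise differences have small coordinates, the normalized volume can be checked to be $1$ by a determinant computation in the coordinates $(x_v)_{v\in V},(x_e)_{e\in E}$. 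An alternative I would try if the facet recursion becomes messy: cite the compressedness (every pulling triangulation unimodular) argument of \cite{TriangulationCosmological}, i.e.\ show that $\Cosmological_G$ has facet width one with respect to its facet-defining inequalities. Then every pulling triangulation is unimodular and only the edge description remains.

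For the edge characterization I would use the standard criterion for regular triangulations. A segment $[\mathbf{x},\mathbf{y}]$ is an edge of $\mathcal{T}$ if and only if some affine function $\ell$ satisfies $\ell\le\omega$ on all lattice points, with equality exactly at $\mathbf{x},\mathbf{y}$.
\begin{itemize}
\item For case (a), I would build $\ell$ from the arc coordinates $x_e$ and $x_{e'}$ of the two different arcs. This exploits that the triangles $\Delta_e,\Delta_{e'}$ for $e\ne e'$ lie in nearly independent coordinate directions, sharing only node coordinates.
\item For (b) and (c), I would work inside $\Delta_f$ together with the adjacent triangles. There the pulled points $\ee_i,\ee_f$ force the local subdivision of $\Delta_f$ into four unimodular triangles, using the edges $\ee_f y_{ij}$, $\ee_f y_{ji}$, $\ee_i t_f$, $\ee_i\ee_f$, $\ee_i\ee_j$, and so on.
\item The non-edges ($\ee_i$–$y_{ij}$, $\ee_f$–$t_f$, and $t_f$–$y_{ij}$) I would rule out by exhibiting, for each, a lattice point strictly below the segment in the lifting. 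For example, $\ee_i$ lies on $[t_f,y_{ij}]$, and $\ee_f,\ee_i,\ee_j$ lie below the segment $[\ee_f\text{-free pair}]$ after pulling.
\end{itemize}

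The main obstacle I expect is the global part of (c). One must show that $\ee_i$ is joined to \emph{every} lattice point outside $\{y_{ij}\}$, including vertices of arcs not incident to $i$ and the points $\ee_j,\ee_e$. This is where the choice of pulling order matters. I would first try the height function
\[
\omega(\ee_v)=-2,\qquad \omega(\ee_e)=-1,\qquad \omega=0 \text{ on vertices},
\]
and construct $\ell$ explicitly for each pair type. Should this fail for some pair, I would adjust the relative heights of the $\ee_e$ so that the Cayley-like structure across arcs is preserved.
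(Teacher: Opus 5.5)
Your plan has a genuine gap. Neither construction you propose yields a triangulation with the stated edges, and one of the non-edges you intend to certify is in fact forced to be an edge.

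\emph{The constructions.} A pulling triangulation is a cone with apex the first pulled point. That point therefore lies in every maximal simplex and is joined to every other lattice point. In the triangulation of the statement no lattice point has this property: $\ee_f$ misses $t_f$, and each $\ee_i$ with $i$ non-isolated misses one $y$-vertex of every incident arc. So no pulling order can work. Already for a single arc $f=\{i,j\}$, pulling $\ee_i$ first gives the triangles $\{\ee_i,t_f,\ee_j\}$, $\{\ee_i,\ee_j,y_{ji}\}$, $\{\ee_i,y_{ij},\ee_f\}$, $\{\ee_i,\ee_f,y_{ji}\}$. There $\ee_j\ee_f$ is not an edge, contradicting (c).

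Your height function ($-2$ on all $\ee_v$, $-1$ on all $\ee_e$, $0$ on vertices) is not equivalent to this pulling, and it is not generic. After subtracting the linear function $-2\sum_v x_v-\sum_e x_e$, it is $0$ on all unit vectors, $1$ on the $y$'s and $3$ on the $t$'s. Take the path with arcs $f=\{1,2\}$, $g=\{2,3\}$. The relation $y_{12}+\ee_3+\ee_g=y_{32}+\ee_1+\ee_f$ has both sides of height $1$. Since $y_{12},y_{32}$ are the only lattice points with $x_2<0$, both sides are optimal in the lower envelope at their common barycentre. Hence these six affinely dependent points lie in one cell, and the subdivision is not a triangulation. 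The symmetry breaking you defer to ``adjust the heights'' is precisely the substance of the result.

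The compressedness fallback also fails. For a single arc, $\Cosmological_G$ is the second dilate of a unimodular triangle, of width $2$ with respect to the facet $x_i+x_f\ge 0$. Pulling $t_f$ first gives a non-unimodular triangulation.

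\emph{The non-edge.} With the paper's convention $y_{ij}=\ee_i-\ee_j+\ee_f$, the point $\ee_i$ is the midpoint of $[t_f,y_{ij}]$. For a leaf $i$ this segment is an edge of $\Cosmological_G$ by \Cref{prop:EdgeCharCosmo}. In general, $\conv\{t_f,y_{ij}: f=\{i,j\}\in E\}$ is a face: it is where the inequality $\sum_{v\neq i}\bigl(2x_v+\sum_{e\ni v}x_e\bigr)+\sum_{e\ni i}x_e\ge 0$, valid on all vertices, is tight. This face is a cross-polytope centred at $\ee_i$, and every full-dimensional simplex on its vertices contains an antipodal pair. Hence $[\ee_i,y_{ij}]$ is an edge of every lattice triangulation using all lattice points, and no lattice point ``below the segment'' can exist. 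Your own midpoint picture of $\Delta_f$, which is also what your height function produces for one arc, contains this edge.

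So item (c) as printed has the indices reversed. The edge that is actually missing is $\ee_i y_{ji}$, the $y$-vertex with coefficient $-1$ at $i$, and this is what the proof of \Cref{lem:CosmoTriangEdgeFormula} uses.

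The paper does not prove the proposition itself; it imports it from \cite[Theorem 2.9]{TriangulationCosmological}. A self-contained proof along your lines would need three things: an explicit symmetry-breaking regular construction, a verification of unimodularity, and a verification of the corrected edge list. Your outline supplies none of them.
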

Similar to the edge count of $\Cosmological_G$, the previous proposition shows that the edge count of any unimodular triangulation of $\Cosmological_G$ only depends on the number of arcs and the number of isolated nodes of the underlying graph. This will be essential  for the proofs of our main results. In the sections below, we will  express the relevant edge counts in terms of simple graph statistics of \(G_{n,p}\), thereby reducing the geometric problem to a probabilistic one.

\subsection{Discrete gradients and a normal approximation bound}\label{sec:prelim_prob}

For the proof of the central limit theorems, we use a standard Rademacher encoding of the Erd\H{o}s--R\'enyi graph and a normal approximation bound on product spaces. Let \(n\geq 1\), let \(p_1,\dots,p_n\in(0,1)\), and set \(q_i\coloneqq 1-p_i\) for \(i\in[n]\). Consider independent random variables \(X_1,\dots,X_n\) with
\[
\PP(X_i=1)=p_i,
\qquad
\PP(X_i=-1)=q_i.
\]
If \(F=F(X_1,\dots,X_n)\) is a measurable real-valued function of $X_1,\ldots,X_n$, we write \(F_i^+\) and \(F_i^-\) for the random variables obtained from \(F\) by setting the \(i\)\textsuperscript{th} coordinate of \((X_1,\dots,X_n)\) to \(+1\) and \(-1\), respectively. The corresponding discrete gradient is defined by
\[
\D_iF\coloneqq \sqrt{p_iq_i}\,(F_i^+-F_i^-).
\]
For \(i,j\in[n]\), the second-order discrete gradient \(\D_i\D_jF\) is defined by applying \(\D_i\) to \(\D_jF\). Explicitly,
\[
\D_i\D_jF
=
\sqrt{p_iq_ip_jq_j}\,
\bigl(
F_{j,i}^{+,+}-F_{j,i}^{+,-}-F_{j,i}^{-,+}+F_{j,i}^{-,-}
\bigr),
\]
where \(F_{j,i}^{\alpha,\beta}\) denotes the random variable obtained by fixing the \(j\)\textsuperscript{th} and $i$\textsuperscript{th} coordinate at \(\{\pm1\}\), where $\alpha$ and $\beta$ determine the sign, respectively. 
We note that \(\D_i\D_iF=0\).

Throughout the paper, distributional convergence to the Gaussian law will be measured in Kolmogorov distance,
\[
d_K(Y,Z)\coloneqq \sup_{z\in\RR}\bigl|\PP(Y\le z)-\PP(Z\le z)\bigr|,
\]
for real-valued random variables \(Y\) and \(Z\). The following estimate is our main tool. It is a finite-dimensional specialization of \cite[Theorem 4.1]{zgstheorem}.

\begin{proposition}\label{prop:2ndOrderPoincare}
Let \(F=F(X_1,\dots,X_n)\) be a measurable real-valued function of independent Rademacher random variables as above, and assume that \(\EE[F]=0\) and \(\EE[F^2]=1\). Define
\begin{align*}
    B_1 &\coloneqq \sum_{j,k,\ell=1}^n \sqrt{\EE[(\D_jF)^2(\D_kF)^2]}\,
    \sqrt{\EE[(\D_\ell\D_jF)^2(\D_\ell\D_kF)^2]},\\
    B_2 &\coloneqq \sum_{j,k,\ell=1}^n \frac{1}{p_\ell q_\ell}\,
    \EE[(\D_\ell\D_jF)^2(\D_\ell\D_kF)^2],\\
    B_3 &\coloneqq \sum_{k=1}^n \frac{1}{p_kq_k}\,\EE[(\D_kF)^4],\\
    B_4 &\coloneqq \sum_{k,\ell=1}^n \frac{1}{p_kq_k}\,
    \sqrt{\EE[(\D_kF)^4]}\,\sqrt{\EE[(\D_\ell\D_kF)^4]},\\
    B_5 &\coloneqq \sum_{k,\ell=1}^n \frac{1}{p_kq_kp_\ell q_\ell}\,
    \EE[(\D_\ell\D_kF)^4].
\end{align*}
Then, for a standard Gaussian random variable \(N\sim\mathcal N(0,1)\),
\[
d_K(F,N)
\le
\frac{\sqrt{15}}{2}\sqrt{B_1}
+\frac{\sqrt{3}}{2}\sqrt{B_2}
+2\sqrt{B_3}
+2\sqrt{6}\sqrt{B_4}
+2\sqrt{3}\sqrt{B_5}.
\]
\end{proposition}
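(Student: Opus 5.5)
The plan is to deduce the bound directly from \cite[Theorem 4.1]{zgstheorem}. That theorem is formulated for functionals of an infinite sequence $(X_k)_{k\in\NN}$ of independent, possibly non-symmetric, Rademacher variables. The proof therefore reduces to three things: embedding our finite setting into the infinite one, checking that the discrete gradients and the terms $B_1,\dots,B_5$ agree with the objects in the cited statement, and verifying the integrability hypotheses.

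\emph{Step 1: embedding.} Given $X_1,\dots,X_n$ with parameters $p_1,\dots,p_n$, we extend to an infinite sequence $(X_k)_{k\in\NN}$. For $k>n$ we add independent Rademacher variables with some fixed parameter, say $p_k=\tfrac12$. We regard $F$ as a functional of the whole sequence that does not depend on the coordinates $k>n$. Then $F_k^+=F_k^-$ for every $k>n$, so $\D_kF=0$. Similarly, $\D_\ell\D_kF=0$ as soon as $k>n$ or $\ell>n$. Every infinite sum appearing in the cited theorem therefore collapses to a sum over indices in $[n]$. The factors $1/(p_kq_k)$ attached to the vanishing indices cause no trouble, because they multiply zero terms.

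\emph{Step 2: matching conventions.} This is the step I expect to need the most care, and it is bookkeeping rather than mathematics. Papers in the Malliavin--Stein literature for Rademacher sequences use slightly different normalizations. Some write the chaos expansion in terms of the standardized variables $Y_k=(X_k-p_k+q_k)/(2\sqrt{p_kq_k})$. Some define the gradient with the factor $\sqrt{p_kq_k}$, as here, and some without it. In particular I would confirm the following:
\begin{itemize}
\item the gradient in \cite{zgstheorem} is $\D_kF=\sqrt{p_kq_k}(F_k^+-F_k^-)$, so that the powers of $p_kq_k$ in $B_2,\dots,B_5$ come out exactly as stated;
\item the second-order gradient is the iterated one, which gives the displayed four-term formula and $\D_k\D_kF=0$;
\item the constants $\tfrac{\sqrt{15}}{2},\tfrac{\sqrt3}{2},2,2\sqrt6,2\sqrt3$ are those of the Kolmogorov-distance version of the theorem, not the Wasserstein version.
\end{itemize}
If the normalization differs, I would rewrite each $B_i$ by substituting the relation between the two gradients and check that the weights $1/(p_kq_k)$ absorb the discrepancy.

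\emph{Step 3: hypotheses.} Since $F$ depends on finitely many two-valued variables, it takes finitely many values. Hence $F$ and all its gradients are bounded, so $F$ lies in the domain of $\D$ and all moments appearing in the theorem are finite. Together with the assumptions $\EE[F]=0$ and $\EE[F^2]=1$, the theorem applies and yields exactly the stated bound on $d_K(F,N)$.
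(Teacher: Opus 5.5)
Your proposal is correct and follows the paper's route: the paper gives no separate argument and simply records the statement as the finite-dimensional specialization of \cite[Theorem 4.1]{zgstheorem}. The details you supply are exactly what that remark leaves implicit: padding with dummy coordinates (on which $\D_kF$ and $\D_\ell\D_kF$ vanish, so all sums collapse to $[n]$), checking the normalization $\D_kF=\sqrt{p_kq_k}(F_k^+-F_k^-)$ and the Kolmogorov constants, and noting that $F$ is bounded and hence in the domain of $\D$.
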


In our applications, the coordinates \(X_i\) will encode the presence or absence of the arcs of \(G_{n,p}\). The relevant edge counts can then be viewed as functionals of independent Rademacher variables, and the proof of the quantitative central limit theorems reduces to estimating their first- and second-order discrete gradients. Once suitable bounds are available, \Cref{prop:2ndOrderPoincare} yields both asymptotic normality and an explicit rate of convergence.

\section{Edges of random cosmological polytopes}\label{sec:CosmoEdges}

In this section, we study the number of edges of the random cosmological polytope \(\Cosmological_{n,p}\) generated by an Erd\H{o}s--R\'enyi random graph \(G_{n,p}\). As explained in \Cref{sec:prelim_cosmo}, the edge structure of cosmological polytopes is particularly transparent: the number of polytope edges can be expressed directly in terms of two basic graph statistics of the underlying graph, namely the number of arcs and the number of leaves. This makes the analysis considerably more direct than in the symmetric edge polytope setting studied in \cite{donzelmann2026centrallimittheoremshigh}.

Throughout this section, \(G_{n,p}\) denotes the Erd\H{o}s--R\'enyi random graph on vertex set \([n]\) with edge probability \(p=p(n)\in(0,1)\), and we write $E_{n,p}$ and $L_{n,p}$ for the number of arcs and leaves of \(G_{n,p}\), respectively. Moreover, we let $K_{n,p}\coloneqq f_1(\Cosmological_{n,p})$ denote the number of edges of the random cosmological polytope \(\Cosmological_{n,p}\).

Our goal is to determine the asymptotic behaviour of \(K_{n,p}\) as \(n\to\infty\), including its expectation, variance, and Gaussian fluctuations. The main result of this section can be summarized as follows.

\begin{thm}\label{thm:CosmoEdgesMain}
Let \(K_{n,p}=f_1(\Cosmological_{n,p})\) be the number of edges of the random cosmological polytope associated with \(G_{n,p}\).
If \(p\gg n^{-2}\), then

\begin{enumerate}
    \item\label{thm:CosmoEdgesMain:exp}
    \hfill\(
        \EE[K_{n,p}] \simeq n^4p^2
        \qquad\text{and}\qquad
        \VV(K_{n,p}) \simeq n^6p^3(1-p).
    \)\hfill\mbox{}

    \item\label{thm:CosmoEdgesMain:clt}
    \hfill\(\displaystyle
        d_K\!\left(
        \frac{K_{n,p}-\EE[K_{n,p}]}{\sqrt{\VV(K_{n,p})}},
        N\right)
        \lesssim \frac{1}{n\sqrt{p(1-p)}},
    \)\hfill\mbox{}\\[0.4\baselineskip]
    where \(N\sim\mathcal N(0,1)\) is a standard Gaussian random variable.
\end{enumerate}
\end{thm}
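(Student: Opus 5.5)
The plan is to reduce $K_{n,p}$ to an explicit function of $E_{n,p}$ and $L_{n,p}$ using \Cref{prop:EdgeCharCosmo}, and then analyse that function probabilistically.

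\textbf{Edge count formula.} Each arc contributes three vertices. Vertices coming from different arcs always span an edge, which gives $9\binom{E_{n,p}}{2}$ edges. Each arc $f=\{i,j\}$ contributes the edge $\{y_{ij},y_{ji}\}$. In addition, every leaf $i$, with its unique arc $f=\{i,j\}$, contributes exactly one edge $\{y_{ij},t_f\}$; an isolated arc with two leaf endpoints therefore contributes two such edges. Hence
\[
K_{n,p}=\tfrac92 E_{n,p}(E_{n,p}-1)+E_{n,p}+L_{n,p},
\]
where $E_{n,p}\sim\mathrm{Bin}(M,p)$ with $M=\binom n2$, and $0\le L_{n,p}\le n$.

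\textbf{Expectation and variance (part (a)).} The expectation follows from $\EE[E_{n,p}(E_{n,p}-1)]=M(M-1)p^2$. The leading term $\tfrac92 M^2p^2\simeq n^4p^2$ dominates $Mp+n$ because $Mp\to\infty$ when $p\gg n^{-2}$.

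For the variance, I split $K_{n,p}-\EE K_{n,p}=A+R$ with
\[
A\coloneqq 9Mp\,(E_{n,p}-Mp).
\]
The remainder $R$ collects $\tfrac92(E_{n,p}-Mp)^2-\tfrac92 Mp(1-p)$, a term $c\,(E_{n,p}-Mp)$ with $|c|\le 6$, and $L_{n,p}-\EE L_{n,p}$. Then:
\begin{itemize}
\item $\VV(A)=81M^3p^3(1-p)\simeq n^6p^3(1-p)$.
\item Binomial central moments give $\VV\bigl((E_{n,p}-Mp)^2\bigr)\lesssim (Mp(1-p))^2+Mp(1-p)$.
\item For $L_{n,p}$, the Efron--Stein inequality applies: toggling one arc changes $L_{n,p}$ by at most $2$, so $\VV(L_{n,p})\le 4Mp(1-p)$.
\end{itemize}
All three contributions to $R$ are $o(\VV(A))$ precisely because $Mp\to\infty$. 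Cauchy--Schwarz on the cross term then yields $\VV(K_{n,p})=\VV(A)(1+o(1))\simeq n^6p^3(1-p)$.

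\textbf{Normal approximation (part (b)).} My first instinct was \Cref{prop:2ndOrderPoincare} with $F=(K_{n,p}-\EE K_{n,p})/\sigma$, where $\sigma^2=\VV(K_{n,p})$. Here $\D_eK=\sqrt{pq}\,(9E^{(e)}+1+L^+_e-L^-_e)$ and $|\D_f\D_eK|\le C pq$. This gives $B_3\lesssim 1/(n^2p(1-p))$, which is the claimed rate. However, the deterministic constant $9$ in the second gradient makes
\[
B_4\simeq \frac{1}{n^4p^3(1-p)},
\]
and this only matches the claimed rate when $np\gtrsim 1$.

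I would therefore use a linearisation argument instead:
\begin{enumerate}
\item The Berry--Esseen theorem for the binomial sum gives $d_K(A/\sqrt{\VV(A)},N)\lesssim 1/\sqrt{Mp(1-p)}\simeq 1/(n\sqrt{p(1-p)})$.
\item Transfer this to $F=(A+R)/\sigma$ with the standard perturbation bound
\[
d_K(X+Y,N)\le d_K(X,N)+\PP(|Y|>\varepsilon)+\varepsilon/\sqrt{2\pi}.
\]
The ratio $\sigma^2/\VV(A)=1+O(\cdot)$ is handled by the same bound, since rescaling a Gaussian costs only the relative error.
\end{enumerate}

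\textbf{Main obstacle.} The key estimate is the size of $R/\sigma$:
\begin{itemize}
\item The quadratic part has size about $Mp(1-p)/\sigma\simeq \sqrt{1-p}/(n\sqrt p)$.
\item The $L$-part has size about $n\sqrt p/\sigma\simeq 1/(n^2p)$, which is $\ll 1/(n\sqrt p)$ since $n^2p\to\infty$.
\end{itemize}
Both are of the target order $1/(n\sqrt{p(1-p)})$, up to the factor $(1-p)$ in the quadratic part.

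The difficulty is that a Chebyshev bound on $\PP(|R|/\sigma>\varepsilon)$ loses a square root of the rate. To avoid this, I would use higher moments (or exponential binomial tails) of $(E_{n,p}-Mp)^2$ and the bounded-differences inequality for $L_{n,p}$. With these, taking $\varepsilon$ a logarithmic multiple of the rate makes the tail probability negligible. If the logarithm cannot be removed, I would use the quadratic structure exactly: $\{\tfrac92 x^2+9Mp\,x\le t\}$ is an interval in $x=E_{n,p}-Mp$, so the Kolmogorov distance can be computed directly from the binomial Berry--Esseen bound at two endpoints. The bounded term $L_{n,p}$ is then absorbed by conditioning on $E_{n,p}$ and using the Gaussian anti-concentration $\PP(|N-z|\le\delta)\lesssim\delta$.
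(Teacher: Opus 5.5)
Your first instinct is exactly the paper's proof, and you abandoned it because of an arithmetic slip in $B_4$. Since $\D_\ell\D_kK_{n,p}=9pq+\D_\ell\D_kL_{n,p}$, with $|\D_\ell\D_kL_{n,p}|\lesssim pq$ (and $=0$ for disjoint arcs), you get $\sqrt{\EE[(\D_\ell\D_kF)^4]}\simeq p^2q^2/\sigma^2$. Your value of $B_4$ corresponds to using $pq/\sigma^2$ instead. Combining the correct value with $\sqrt{\EE[(\D_kF)^4]}\lesssim n^4p^3q/\sigma^2$, the weight $1/(pq)$ and the $\simeq n^4$ pairs $(k,\ell)$ gives
\[
B_4\lesssim\frac{n^8p^4q^2}{\sigma^4}\simeq\frac{1}{n^4p^2},
\qquad
\frac{\sqrt{B_4}}{1/(n\sqrt{pq})}\simeq\frac{\sqrt{q}}{n\sqrt{p}}\longrightarrow 0
\]
throughout $p\gg n^{-2}$; no condition $np\gtrsim 1$ is needed. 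The other terms are also negligible: $B_1\lesssim q/(n^2p)$, $B_2\lesssim q/(n^6p^3)$ and $B_5\lesssim 1/(n^8p^4)$. So $B_3$ dominates, and \Cref{prop:2ndOrderPoincare} gives the stated rate directly from your formula for $\D_eK_{n,p}$ and the bound $|\D_f\D_eK_{n,p}|\lesssim pq$.

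The linearisation route you substitute is viable in principle. As written, however, it fails in exactly the sparse regime $n^{-2}\ll p\ll n^{-1}$ that motivated it, because you control the leaf term through $0\le L_{n,p}\le n$. A shift of size $n$ costs $n/\sigma\simeq 1/(n^2p^{3/2}\sqrt{q})$ in Kolmogorov distance, which exceeds $1/(n\sqrt{pq})$ by the factor $1/(np)$. The plain bounded-differences inequality over $M$ arc variables only controls $L_{n,p}$ at scale $\sqrt{M}\simeq n$, so it has the same defect.

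The missing observation is $L_{n,p}\le 2E_{n,p}$. Write $K_{n,p}-\EE K_{n,p}=g(X)+L_{n,p}-\EE L_{n,p}$, with $X=E_{n,p}-Mp$ and $g$ your quadratic. Then
\[
g(X)-\EE L_{n,p}\;\le\; K_{n,p}-\EE K_{n,p}\;\le\; g(X)+2(Mp+X)-\EE L_{n,p}.
\]
Both bounds are quadratic in $X$, so your interval-plus-Berry--Esseen argument applies to each. The shifts are $\lesssim Mp/\sigma\simeq 1/(n\sqrt{pq})$, and no logarithm appears.

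The same slip affects part (a): $n$ is not $o(n^4p^2)$ when $p\lesssim n^{-3/2}$. Use $\EE L_{n,p}=n(n-1)p(1-p)^{n-2}\le n^2p$ instead.

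Your edge formula and variance argument are correct: the dominant linear part, binomial central moments, $\VV(L_{n,p})\le 4Mpq$ via Efron--Stein, and Cauchy--Schwarz. The Efron--Stein step is a bit quicker than the paper's explicit leaf-covariance computation.
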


The proof will be divided into four parts. We first compute the expectation of \(K_{n,p}\), then determine the order of its variance, and afterwards derive suitable bounds on the first- and second-order discrete gradients. These estimates will finally be combined with the normal approximation bound from \Cref{prop:2ndOrderPoincare} to prove the quantitative central limit theorem.

The starting point is the following deterministic identity we recall from \cite[Theorem 5.1]{kuehne2024facescosmologicalpolytopes}.

\begin{lemma}\label{lem:CosmoEdgeFormula}
For every finite graph \(G=(V,E)\),
\[
f_1(\Cosmological_G)
=
9\binom{|E|}{2}+|E|+ |\{v\in V:\deg(v)=1\}|.
\]
\end{lemma}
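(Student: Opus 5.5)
We prove the identity by counting the edges of \(\Cosmological_G\) directly from \Cref{prop:EdgeCharCosmo}. First we fix the vertex set. Each arc \(f=\{i,j\}\) contributes exactly the three points \(t_f, y_{ij}, y_{ji}\). These points are pairwise distinct, and points belonging to different arcs are distinct as well, since they have different supports in the coordinates \(\ee_f\). Moreover, all of them are vertices of \(\Cosmological_G\). One way to see this is to note that each such point lies in \(\{0,\pm1\}^{V\cup E}\) with exactly three nonzero coordinates summing to \(1\) and is not a convex combination of the others. Alternatively, this is already implicit in \Cref{prop:EdgeCharCosmo} and in the description of the lattice points recalled in \Cref{sec:prelim_cosmo}, which we may take as given. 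Hence \(\Cosmological_G\) has exactly \(3|E|\) vertices, split into \(|E|\) blocks of size three, one block per arc.

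Next we count the pairs in case (a) of \Cref{prop:EdgeCharCosmo}. Every pair of vertices coming from two different arcs forms an edge. Choosing the two arcs gives \(\binom{|E|}{2}\) possibilities, and choosing one vertex from each block gives \(3\cdot 3\) more. So case (a) contributes \(9\binom{|E|}{2}\) edges.

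Then we count the pairs inside a single block, that is, for a fixed arc \(f=\{i,j\}\). By (b)(i), the pair \(\{y_{ij},y_{ji}\}\) is always an edge, which gives one edge per arc and \(|E|\) in total. By (b)(ii), the pair \(\{y_{ij},t_f\}\) is an edge exactly when \(i\) is a leaf, and symmetrically \(\{y_{ji},t_f\}\) is an edge exactly when \(j\) is a leaf. So the arc \(f\) contributes \(\indicator\{\deg(i)=1\}+\indicator\{\deg(j)=1\}\) further edges. Summing over all arcs gives \(\sum_{f\in E}\sum_{v\in f}\indicator\{\deg(v)=1\}\). Exchanging the order of summation, each node \(v\) is counted \(\deg(v)\) times, once for each incident arc. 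A leaf has exactly one incident arc, so it is counted exactly once, and every node that is not a leaf contributes \(0\). Thus the sum equals \(|\{v\in V:\deg(v)=1\}|\).

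Adding the three contributions gives the stated formula. The only step needing real care is this last double-counting for case (b)(ii). We must avoid a double count when both endpoints of an arc are leaves, which happens when an isolated arc forms a component \(K_2\). In that case the two pairs \(\{y_{ij},t_f\}\) and \(\{y_{ji},t_f\}\) are different pairs, so both are counted, and indeed two leaves are contributed. Hence the formula is consistent in this case as well.
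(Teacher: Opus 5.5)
Your counting argument is correct and is the derivation the paper has in mind: the paper does not reprove the formula but quotes \cite[Theorem 5.1]{kuehne2024facescosmologicalpolytopes}, remarking after \Cref{prop:EdgeCharCosmo} that the edge number depends only on the numbers of arcs and leaves, and your tally $9\binom{|E|}{2}+|E|+\sum_{f\in E}\sum_{v\in f}\mathbf{1}_{\{\deg(v)=1\}}$ makes that explicit. Your vertex claim is asserted (``not a convex combination of the others'') rather than shown, though the paper also takes it as given; it follows at once because $t_f$ is the unique minimizer of $x_f$ and $y_{ij}$ is the unique maximizer of $x_f+x_i-x_j$ among the generating points, using that $G$ is simple.
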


Applied to \(G_{n,p}\), this shows that
\begin{equation}\label{eq:CosmoKRepresentation}
K_{n,p}
=
9\binom{E_{n,p}}{2}+E_{n,p}+L_{n,p}.
\end{equation}
Thus, the probabilistic analysis of the polytope edge count reduces to the study of the random graph statistics \(E_{n,p}\) and \(L_{n,p}\).

\subsection{Expectation}\label{sec:CosmoExpectation}

We begin with the expectation of \(K_{n,p}\). Since \eqref{eq:CosmoKRepresentation} expresses \(K_{n,p}\) as a simple combination of \(E_{n,p}\) and \(L_{n,p}\), the computation is immediate once the expectations of these quantities are known.

\begin{lemma}\label{lem:CosmoExpectationExact}
Let \(K_{n,p}=f_1(\Cosmological_{n,p})\). Then
\[
\EE[K_{n,p}]
=
9\binom{\binom{n}{2}}{2}p^2
+
\binom{n}{2}p
+
n(n-1)p(1-p)^{n-2}.
\]
\end{lemma}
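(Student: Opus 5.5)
The plan is to take expectations in the representation \eqref{eq:CosmoKRepresentation} and use linearity. This reduces the claim to three moment computations: \(\EE[E_{n,p}]\), \(\EE\bigl[\binom{E_{n,p}}{2}\bigr]\), and \(\EE[L_{n,p}]\). All three are elementary, since \(E_{n,p}\sim\mathrm{Bin}\bigl(\binom n2,p\bigr)\) and \(L_{n,p}\) is a sum of node indicators.

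For the arc count, set \(m\coloneqq\binom n2\). Then \(\EE[E_{n,p}]=mp=\binom n2 p\). For the binomial term, write
\[
\binom{E_{n,p}}{2}=\sum_{\{f,g\}}\indicator\{f,g\in E(G_{n,p})\},
\]
where the sum runs over unordered pairs of distinct arcs of the complete graph \(K_n\). There are \(\binom m2\) such pairs. By independence, each indicator has expectation \(p^2\). Hence
\[
\EE\Bigl[\tbinom{E_{n,p}}{2}\Bigr]=\tbinom{m}{2}p^2 .
\]
Equivalently, this is the second factorial moment of a binomial variable divided by \(2\). Multiplying by \(9\) gives the first term of the claimed formula.

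For the leaves, write \(L_{n,p}=\sum_{v\in[n]}\indicator\{\deg(v)=1\}\). For each fixed \(v\), the degree satisfies \(\deg(v)\sim\mathrm{Bin}(n-1,p)\). Therefore
\[
\PP(\deg(v)=1)=(n-1)p(1-p)^{n-2},
\]
and summing over the \(n\) nodes gives \(\EE[L_{n,p}]=n(n-1)p(1-p)^{n-2}\).

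Adding the three contributions yields the stated identity. There is no real obstacle here. The only point needing care is that the pair count in the quadratic term uses \(\binom{\binom n2}{2}\) pairs of arcs, each contributing \(p^2\), with no diagonal terms since the arcs in a pair are distinct.
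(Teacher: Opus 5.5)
Your proposal is correct and follows the paper's proof essentially step for step. You apply linearity of expectation to \eqref{eq:CosmoKRepresentation}, count pairs of arcs to get $\EE\bigl[\binom{E_{n,p}}{2}\bigr]=\binom{\binom{n}{2}}{2}p^2$, and sum leaf indicators using $\PP(\deg(v)=1)=(n-1)p(1-p)^{n-2}$; the only cosmetic difference is that you obtain this probability from $\deg(v)\sim\mathrm{Bin}(n-1,p)$, where the paper counts the choice of the unique neighbour directly.
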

\begin{proof}
By \eqref{eq:CosmoKRepresentation},
\[
\EE[K_{n,p}]
=
9\,\EE\!\left[\binom{E_{n,p}}{2}\right]
+\EE[E_{n,p}]
+\EE[L_{n,p}].
\]
Since \(E_{n,p}\sim \mathrm{Bin}\!\left(\binom{n}{2},p\right)\), we have
\[
\EE[E_{n,p}] = \binom{n}{2}p\qquad\text{and}\qquad\EE\!\left[\binom{E_{n,p}}{2}\right]
=
\binom{\binom{n}{2}}{2}p^2.
\]
Indeed, \(\binom{E_{n,p}}{2}\) counts unordered pairs of present arcs, and each such pair occurs with probability \(p^2\).
It remains to compute \(\EE[L_{n,p}]\). Writing \(L_{n,p}=\sum_{v\in[n]}{\bf 1}_{\{\deg(v)=1\}}\), we obtain
\[
\EE[L_{n,p}]
=
\sum_{v\in[n]} \PP(\deg(v)=1).
\]
For a fixed node \(v\), there are \(n-1\) choices for its unique neighbor, and all remaining \(n-2\) arcs incident to \(v\) are forced to be absent. Hence $\PP(\deg(v)=1)=(n-1)p(1-p)^{n-2}$,
so that $\EE[L_{n,p}]=n(n-1)p(1-p)^{n-2}$.
Combining these identities yields the claim.
\end{proof}

The exact formula in \Cref{lem:CosmoExpectationExact} immediately implies the generic expectation asymptotics in the non-degenerate regime.

\begin{proof}[Proof of \Cref{thm:CosmoEdgesMain}~\ref{thm:CosmoEdgesMain:exp}, expectation.]
By \Cref{lem:CosmoExpectationExact},
\[
\EE[K_{n,p}]
=
9\binom{\binom{n}{2}}{2}p^2
+
\binom{n}{2}p
+
n(n-1)p(1-p)^{n-2}.
\]
The first term is of order \(n^4p^2\), the second is of order \(n^2p\), and the third is at most of order \(n^2p\). Since \(p\gg n^{-2}\), we have \(n^2p \ll n^4p^2\), and therefore the first term dominates. This proves the claim.
\end{proof}

\subsection{Variance}\label{sec:CosmoVariance}

We next determine the order of the variance of \(K_{n,p}\), thereby completing the proof of \Cref{thm:CosmoEdgesMain}~\ref{thm:CosmoEdgesMain:exp}. As in the computation for the expectation, the key point is that the edge count of \(\Cosmological_{n,p}\) can be written in terms of the number of arcs and the number of leaves of \(G_{n,p}\). The dominant contribution comes from the quadratic term in \(E_{n,p}\), while the leaf term is of lower order.

\begin{lemma}\label{lem:CosmoVariance}
Let \(K_{n,p}=f_1(\Cosmological_{n,p})\). Then
\[
\VV(K_{n,p}) \begin{cases}
    \simeq n^6p^3(1-p),\qquad&\text{if } p\gg n^{-2}\\
    \lesssim n^2p, \qquad & \text{if } p\lesssim n^{-2}.
\end{cases}
\]
\end{lemma}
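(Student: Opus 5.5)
Write $M\coloneqq\binom{n}{2}$ and $A\coloneqq 9\binom{E_{n,p}}{2}+E_{n,p}$, so that by \eqref{eq:CosmoKRepresentation} $K_{n,p}=A+L_{n,p}$. The plan is to compute $\VV(A)$ exactly, bound $\VV(L_{n,p})$ crudely, and combine the two through
\[
\sqrt{\VV(A)}-\sqrt{\VV(L_{n,p})}\le\sqrt{\VV(K_{n,p})}\le\sqrt{\VV(A)}+\sqrt{\VV(L_{n,p})}.
\]

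\emph{Variance of $A$.} Since $E_{n,p}\sim\mathrm{Bin}(M,p)$, we have $A=\tfrac92E_{n,p}^2-\tfrac72E_{n,p}$, a polynomial in a binomial variable. Using the binomial central moments $\mu_2=Mpq$, $\mu_3=Mpq(1-2p)$ and $\mu_4=3(Mpq)^2+Mpq(1-6pq)$, with $q=1-p$, one expands $E=Mp+(E-Mp)$. The leading term of $\VV(A)$ is then
\[
\bigl(9Mp-\tfrac72\bigr)^2Mpq=81M^3p^3q\,(1+o(1))\simeq n^6p^3(1-p),
\]
provided $Mp\to\infty$, which is exactly the condition $p\gg n^{-2}$. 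The remaining terms come from the cross term with $\mu_3$ and from the quartic centered term. Their sizes are $O(M^2p^2q)$ and $O\bigl((Mpq)^2+Mpq\bigr)$. Each is $o(M^3p^3q)$ once $Mp\to\infty$; note that $(Mpq)^2\le M^2p^2q$, so the quartic term is also controlled. This part is routine algebra. Alternatively, one can write $\VV(A)=\VV\bigl(g(E_{n,p})\bigr)$ and use the Efron--Stein / Poincaré inequality together with an exact computation. I would simply compute directly.

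\emph{Variance of $L_{n,p}$.} This is the step I expect to be the main obstacle, but only a crude bound is needed. Write $L_{n,p}=\sum_v \mathbf 1_{\{\deg v=1\}}$ and bound
\[
\VV(L_{n,p})\le \EE[L_{n,p}]+\sum_{u\neq v}\Cov\bigl(\mathbf 1_{\{\deg u=1\}},\mathbf 1_{\{\deg v=1\}}\bigr).
\]
Condition on the state of the arc $\{u,v\}$. Given that state, the two degree events are independent, and a short computation shows that each covariance is $O\bigl(p\,(np)^2(1-p)^{2n}+p(1-p)^{2n-4}\bigr)$. Summing over the $n^2$ pairs gives $\VV(L_{n,p})\lesssim n^2p$.

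A cruder route avoids the computation altogether: changing one arc changes $L_{n,p}$ by at most $2$, so Efron--Stein gives
\[
\VV(L_{n,p})\le \tfrac12\cdot 4\cdot 2Mp(1-p)\lesssim n^2p(1-p).
\]
(Using a resampled copy of each arc, the change is nonzero only if the arc flips, which has probability $2p(1-p)$.) I would use this route. It yields $\VV(L_{n,p})\lesssim n^2p(1-p)=o(n^6p^3(1-p))$ whenever $n^4p^2\to\infty$.

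\emph{Conclusion.} For $p\gg n^{-2}$, the sandwich inequality gives $\VV(K_{n,p})\simeq n^6p^3(1-p)$. For $p\lesssim n^{-2}$, we have $Mp=O(1)$, and the same expansion gives
\[
\VV(A)\lesssim M^3p^3q+M^2p^2q+Mpq\lesssim Mp\lesssim n^2p.
\]
Combined with $\VV(L_{n,p})\lesssim n^2p$ and the elementary bound $\VV(X+Y)\le 2\VV(X)+2\VV(Y)$, this yields $\VV(K_{n,p})\lesssim n^2p$.
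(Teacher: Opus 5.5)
Your proof is correct, and it takes a somewhat different route from the paper's.

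\textbf{The paper's route.} It expands $\VV(K_{n,p})$ into three variances and three covariances. It computes $\VV\bigl(\binom{E_{n,p}}{2}\bigr)$ by counting pairs of arc-pairs that share one arc. It bounds $\VV(L_{n,p})$ by computing $\PP(\deg(v)=1,\deg(w)=1)$ explicitly, which gives the exact pair covariance $p(1-p)^{2n-5}(1-(n-1)p)^2\ge 0$. The cross terms are then absorbed with Cauchy--Schwarz.

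\textbf{Your route.}
\begin{itemize}
\item You merge $9\binom{E_{n,p}}{2}+E_{n,p}$ into a single quadratic polynomial in a binomial variable. Its variance then follows from the binomial central moments and yields the leading term $81M^3p^3(1-p)(1+o(1))$ directly.
\item You replace the leaf-pair computation with Efron--Stein. Toggling one arc changes $L_{n,p}$ by at most $2$, so $\VV(L_{n,p})\le 4\binom{n}{2}p(1-p)$, uniformly in $p$.
\item Your $L^2$ triangle-inequality sandwich does the job of the paper's Cauchy--Schwarz step on the covariance terms.
\end{itemize}

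\textbf{Comparison.} Your argument is shorter and more robust: the bounded-difference bound needs no knowledge of the leaf distribution. It also carries the factor $1-p$, which is all the lemma requires in both regimes. The paper's computation gives more information about $L_{n,p}$ itself, namely nonnegative pair correlations and a bound that becomes exponentially small once $np\to\infty$. None of that is needed here.

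One minor point: the powers of $1-p$ in your sketched covariance bound for the first (discarded) route are slightly too large, since the exact exponent is $2n-5$. This only matters when $p\to 1$, and since you rely on Efron--Stein instead, it has no effect on your proof.
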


\begin{proof}
Recall from \eqref{eq:CosmoKRepresentation} that
\[
K_{n,p}=9\binom{E_{n,p}}{2}+E_{n,p}+L_{n,p}.
\]
Hence
\begin{align}
\VV(K_{n,p})
&=
81\,\VV\!\left(\binom{E_{n,p}}{2}\right)
+\VV(E_{n,p})
+\VV(L_{n,p}) \notag\\
&\quad
+18\,\Cov\!\left(\binom{E_{n,p}}{2},E_{n,p}\right)
+18\,\Cov\!\left(\binom{E_{n,p}}{2},L_{n,p}\right)
+2\,\Cov(E_{n,p},L_{n,p}).
\label{eq:CosmoVarianceDecomposition}
\end{align}

We first analyze the dominant term. Since \(E_{n,p}\sim \mathrm{Bin}\!\left(\binom{n}{2},p\right)\), the random variable \(\binom{E_{n,p}}{2}\) counts unordered pairs of present arcs. Writing it as a sum of indicator random variables over all unordered pairs of distinct arcs, we obtain
\[
\binom{E_{n,p}}{2}
=
\sum_{\{e,f\} \in \binom{[n]}{2}}
{\bf 1}_{\{ \{e,f\}\subseteq E(G_{n,p})\}} .
\]
Two such indicators are independent unless the corresponding pairs of arcs share at least one arc. Consequently,
\begin{align*}
\VV\!\left(\binom{E_{n,p}}{2}\right)
&=
3\binom{\binom{n}{2}}{3}\bigl(p^3-p^4\bigr)
+
\binom{\binom{n}{2}}{2}\bigl(p^2-p^4\bigr) \\
&=
3\binom{\binom{n}{2}}{3}p^3(1-p)
+
\binom{\binom{n}{2}}{2}p^2(1-p^2).
\end{align*}
In particular,
\begin{equation}\label{eq:VarChooseEnp}
\VV\!\left(\binom{E_{n,p}}{2}\right)
\simeq n^6p^3(1-p)+n^4p^2(1-p^2).
\end{equation}
If \(p\gg n^{-2}\), then \(n^4p^2  \ll n^6p^3 \), and hence
\begin{equation}\label{eq:VarChooseEnpAsymptotic}
\VV\!\left(\binom{E_{n,p}}{2}\right)\simeq n^6p^3(1-p).
\end{equation}

Next,
\[
\VV(E_{n,p})=\binom{n}{2}p(1-p)\lesssim n^2p(1-p),
\]
and it remains to bound \(\VV(L_{n,p})\). Write
\[
L_{n,p}=\sum_{v\in[n]} {\bf 1}_{\{\deg(v)=1\}}.
\]
For a fixed node \(v\in[n]\),
\[
\PP(\deg(v)=1)=(n-1)p(1-p)^{n-2},
\]
and therefore
\[
\VV\bigl({\bf 1}_{\{\deg(v)=1\}}\bigr)
\leq (n-1)p(1-p)^{n-2}.
\]
Hence
\begin{equation}\label{eq:VarLeafsDiagSharp}
\sum_{v\in[n]}\VV\bigl({\bf 1}_{\{\deg(v)=1\}}\bigr)
\lesssim n^2p(1-p)^{n-2}.
\end{equation}

Distinct nodes  \(v,w\in[n]\) are leaves in exactly two situations: either the arc \(vw\) is present and all other arcs incident to \(v\) or \(w\) are absent, or each of \(v\) and \(w\) is connected to a unique neighbour different from the other. In the first case, \(2n-4\) additional arcs must be absent, and in the second case, \(2n-5\) additional arcs must be absent. Thus
\[
\PP\bigl(\deg(v)=1,\deg(w)=1\bigr)
=
p(1-p)^{2n-4}
+
(n-2)^2p^2(1-p)^{2n-5}.
\]
It follows that
\begin{align*}
\Cov\bigl({\bf 1}_{\{\deg(v)=1\}},{\bf 1}_{\{\deg(w)=1\}}\bigr)
&=
p(1-p)^{2n-4}
+(n-2)^2p^2(1-p)^{2n-5} -(n-1)^2p^2(1-p)^{2n-4} \\
&=
p(1-p)^{2n-5}\bigl(1-(n-1)p\bigr)^2.
\end{align*}
In particular, the covariance is always non-negative. Since
\[
\bigl(1-(n-1)p\bigr)^2 \leq 2+2(n-1)^2p^2 \lesssim 1+n^2p^2,
\]
we obtain
\[
\Cov\bigl({\bf 1}_{\{\deg(v)=1\}},{\bf 1}_{\{\deg(w)=1\}}\bigr)
\lesssim p(1-p)^{2n-5}+n^2p^3(1-p)^{2n-5},
\]
and hence
\begin{equation}\label{eq:VarLeafsOffDiagSharp}
\sum_{v\neq w}
\Cov\bigl({\bf 1}_{\{\deg(v)=1\}},{\bf 1}_{\{\deg(w)=1\}}\bigr)
\lesssim n^2p(1-p)^{2n-5}+n^4p^3(1-p)^{2n-5}.
\end{equation}
Combining \eqref{eq:VarLeafsDiagSharp} and \eqref{eq:VarLeafsOffDiagSharp}, we arrive at
\begin{equation}\label{eq:VarLeafsBound}
\VV(L_{n,p})
\lesssim
n^2p(1-p)^{n-2}
+
n^2p(1-p)^{2n-5}
+
n^4p^3(1-p)^{2n-5}.
\end{equation}

We now return to \eqref{eq:CosmoVarianceDecomposition}. By the Cauchy--Schwarz inequality,
\[
|\Cov(X,Y)|\leq \sqrt{\VV(X)\VV(Y)}
\]
for any square-integrable random variables \(X\) and \(Y\). Using \eqref{eq:VarChooseEnpAsymptotic}, \eqref{eq:VarLeafsBound}, and the bound for \(\VV(E_{n,p})\), we see that
\[
\VV(E_{n,p})\ll n^6p^3(1-p),
\]
if \(p\gg n^{-2}\). 
Moreover, each term in \eqref{eq:VarLeafsBound} is also of smaller order than \(n^6p^3(1-p)\). Indeed,
\[
\frac{n^2p(1-p)^{n-2}}{n^6p^3(1-p)}
=
\frac{(1-p)^{n-3}}{n^4p^2}
\leq \frac{1}{n^4p^2}\to 0,
\]
\[
\frac{n^2p(1-p)^{2n-5}}{n^6p^3(1-p)}
=
\frac{(1-p)^{2n-6}}{n^4p^2}
\leq \frac{1}{n^4p^2}\to 0,
\]
and
\[
\frac{n^4p^3(1-p)^{2n-5}}{n^6p^3(1-p)}
=
\frac{(1-p)^{2n-6}}{n^2}
\leq \frac{1}{n^2}\to 0.
\]
Hence,
\[
\VV(L_{n,p}) \ll n^6p^3(1-p).
\]
By Cauchy--Schwarz, the same is then true for all covariance terms in \eqref{eq:CosmoVarianceDecomposition}. Since the first term on the right-hand side of \eqref{eq:CosmoVarianceDecomposition} is positive and asymptotic to \(81\,n^6p^3(1-p)\), we conclude that
\[
\VV(K_{n,p})\simeq n^6p^3(1-p),
\qquad\text{if } p\gg n^{-2}.
\]

Finally, if \(p\lesssim n^{-2}\), then \(n^4p^2\lesssim n^2p\), \(n^6p^3\lesssim n^2p\), and \(n^4p^3\lesssim n^2p\). Thus \eqref{eq:VarChooseEnp} and \eqref{eq:VarLeafsBound} imply
\[
\VV\!\left(\binom{E_{n,p}}{2}\right)\lesssim n^2p,
\qquad
\VV(E_{n,p})\lesssim n^2p,
\qquad
\VV(L_{n,p})\lesssim n^2p.
\]
Using once more the Cauchy--Schwarz inequality in \eqref{eq:CosmoVarianceDecomposition}, we obtain
\[
\VV(K_{n,p})\lesssim n^2p.
\]
This completes the proof.
\end{proof}

\begin{rem}
The proof shows that the variance behaviour of \(K_{n,p}\) is much simpler than in the symmetric edge polytope setting of \cite{donzelmann2026centrallimittheoremshigh}. There, the leading covariance contributions partially cancel in a distinguished parameter regime, whereas for cosmological polytopes the dominant term is already visible in the variance of \(\binom{E_{n,p}}{2}\).
\end{rem}

\subsection{Discrete gradients and a quantitative central limit theorem}\label{sec:CosmoCLT}

We now prepare the proof of the normal approximation bound by estimating the discrete gradients of \(K_{n,p}\). Since \(K_{n,p}\) is given by the simple representation \eqref{eq:CosmoKRepresentation},
its dependence on a single arc variable can be analyzed rather explicitly. As in \Cref{sec:prelim_prob}, we encode the random graph \(G_{n,p}\) by independent Rademacher variables indexed by the possible arcs \(e\in \binom{[n]}{2}\). For such an arc \(e\), the operator \(\D_e\) denotes the corresponding discrete gradient.

\begin{lemma}\label{lem:CosmoFirstGradient}
For every \(e\in \binom{[n]}{2}\),
\[
\EE\bigl[(\D_e K_{n,p})^4\bigr]\lesssim n^8p^6(1-p)^2.
\]
\end{lemma}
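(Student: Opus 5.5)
We compute $\D_e K_{n,p}$ explicitly from the representation \eqref{eq:CosmoKRepresentation} and then bound its fourth moment term by term. Write $E^{\pm}$ and $L^{\pm}$ for the arc and leaf counts with the arc $e=\{u,v\}$ forced present or absent. Then $E^+=E^-+1$, where $E^-=E_{n,p}-\indicator_{\{e\in E(G_{n,p})\}}$ counts the present arcs other than $e$; in particular $E^-\sim\mathrm{Bin}\bigl(\binom n2-1,p\bigr)$. Since $\binom{m+1}{2}-\binom m2=m$, we obtain
\[
\D_eK_{n,p}=\sqrt{p(1-p)}\,\bigl(9E^-+1+L^+-L^-\bigr).
\]
Adding or removing $e$ can only change the leaf status of the two endpoints $u,v$ and of nothing else. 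Hence $|L^+-L^-|\le 2$ deterministically.

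Using $(a+b)^4\le 8(a^4+b^4)$, this gives
\[
\EE\bigl[(\D_eK_{n,p})^4\bigr]\lesssim p^2(1-p)^2\bigl(\EE[(E^-)^4]+1\bigr).
\]
The fourth moment of a binomial $\mathrm{Bin}(m,p)$ with $m=\binom n2-1$ is the standard expansion in factorial moments:
\[
\EE[(E^-)^4]=\sum_{k=1}^4 S(4,k)\,(m)_k\,p^k\lesssim m^4p^4+mp\simeq n^8p^4+n^2p,
\]
where $S(4,k)$ are Stirling numbers of the second kind and $(m)_k$ is the falling factorial. Altogether,
\[
\EE\bigl[(\D_eK_{n,p})^4\bigr]\lesssim p^2(1-p)^2\bigl(n^8p^4+n^2p+1\bigr).
\]

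It remains to see that $n^8p^4$ dominates the other two terms. This is the only point needing care, and it is where the standing assumption $p\gg n^{-2}$ of \Cref{thm:CosmoEdgesMain} enters. Under it, $n^2p\to\infty$, so $n^8p^4=(n^2p)^4\gg n^2p\gg 1$. Both lower-order terms are therefore absorbed, and $\EE[(\D_eK_{n,p})^4]\lesssim n^8p^6(1-p)^2$.

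Without this assumption, that is for $p\lesssim n^{-2}$, the constant term from $L^+-L^-$ and the term $mp$ would not be absorbed. So I would state the lemma explicitly in the regime $p\gg n^{-2}$. Alternatively, one could keep the full bound $p^2(1-p)^2(n^8p^4+1)$, which is all that is used later when the bound is fed into \Cref{prop:2ndOrderPoincare}.
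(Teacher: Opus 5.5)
Your proof is correct and follows the paper's argument: both bound $|\D_eK_{n,p}|$ by $\sqrt{p(1-p)}$ times an affine function of the arc count and then use the binomial fourth moment $\lesssim n^8p^4$; you compute the gradient exactly as $9E^-+1+L^+-L^-$, while the paper only bounds it by $9E_{n,p}+3$. Your caveat is also accurate: the paper uses $p\gtrsim n^{-2}$ tacitly in the fourth-moment step (it says ``in the regime considered here'' only in the analogous triangulation lemma), and for $p\ll n^{-2}$ the stated bound actually fails, since then $\D_eK_{n,p}=3\sqrt{p(1-p)}$ with probability tending to one.
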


\begin{proof}
Fix \(e=uv\in \binom{[n]}{2}\). Since \(K_{n,p}=9\binom{E_{n,p}}{2}+E_{n,p}+L_{n,p}\), we can treat the three terms separately.

Changing the state of the arc \(e\) changes \(E_{n,p}\) by exactly one. Hence
\[
\D_e E_{n,p}=\sqrt{p(1-p)}.
\]
Moreover, if \(e\) is switched on or off, then the quantity \(\binom{E_{n,p}}{2}\) changes by the current number of present arcs not equal to \(e\). In particular,
\begin{equation}\label{eq:discreteGradientForSets}
\left|\D_e \binom{E_{n,p}}{2}\right|
\leq \sqrt{p(1-p)}\,E_{n,p}.
\end{equation}
Finally, since changing the state of \(e\) can only affect the leaf status of its two endpoints \(u\) and \(v\), we get 
\[
|\D_e L_{n,p}|\leq 2\sqrt{p(1-p)}.
\]
Combining these bounds, we obtain
\[
|\D_e K_{n,p}|
\leq \sqrt{p(1-p)}\,\bigl(9E_{n,p}+3\bigr),
\]
and hence
\[
(\D_e K_{n,p})^4
\lesssim p^2(1-p)^2\bigl(E_{n,p}^4+1\bigr).
\]
Since \(E_{n,p}\sim \mathrm{Bin}\!\left(\binom{n}{2},p\right)\), its fourth moment is of order at most \(n^8p^4\). Consequently,
\[
\EE\bigl[(\D_e K_{n,p})^4\bigr]
\lesssim p^2(1-p)^2\, n^8p^4
=
n^8p^6(1-p)^2,
\]
which proves the claim.
\end{proof}

The second-order gradients are even simpler.

\begin{lemma}\label{lem:CosmoSecondGradient}
For all \(e,f\in \binom{[n]}{2}\),
\[
|\D_f\D_e K_{n,p}|\lesssim p(1-p).
\]
In particular,
\[
\EE\bigl[(\D_f\D_e K_{n,p})^4\bigr]\lesssim p^4(1-p)^4.
\]
\end{lemma}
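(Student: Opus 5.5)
We will bound $\D_f\D_eK_{n,p}$ pointwise by using the representation \eqref{eq:CosmoKRepresentation} and linearity of the second-order gradient:
\[
\D_f\D_eK_{n,p}=9\,\D_f\D_e\binom{E_{n,p}}{2}+\D_f\D_eE_{n,p}+\D_f\D_eL_{n,p}.
\]
We treat the three terms separately. If $e=f$, everything vanishes because $\D_e\D_e=0$, so we assume $e\neq f$. Each $\D_f\D_e$ carries the prefactor $\sqrt{p(1-p)}\sqrt{p(1-p)}=p(1-p)$ times the alternating sum $F^{+,+}-F^{+,-}-F^{-,+}+F^{-,-}$. It therefore suffices to show that this alternating sum is bounded by an absolute constant for each of the three functionals.

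For $E_{n,p}$, which is linear in the arc indicators, the alternating sum is $0$. For $\binom{E_{n,p}}{2}=\sum_{\{g,h\}}\indicator_{g}\indicator_{h}$, write the sum over unordered pairs of arcs. The only summand depending on both $e$ and $f$ is $\indicator_e\indicator_f$, whose alternating sum equals $1$; every other summand contributes $0$. Equivalently, $\D_e\binom{E_{n,p}}{2}=\sqrt{p(1-p)}\,(E_{n,p}-\indicator_e)$, and applying $\D_f$ gives exactly $p(1-p)$.

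For the leaf count, the main point is locality. Write $L_{n,p}=\sum_v\indicator_{\{\deg(v)=1\}}$. The indicator for $v$ depends only on arcs incident to $v$. Hence its alternating sum in $(e,f)$ vanishes unless $v\in e\cap f$: if $v$ is not incident to $e$, then the $\pm$ values in the $e$-coordinate coincide and the alternating sum cancels. Since $e\neq f$, they share at most one node. So at most one term survives, and it is an alternating sum of four indicator values, hence bounded by $4$ in absolute value.

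Combining the three bounds gives $|\D_f\D_eK_{n,p}|\le p(1-p)(9+4)\lesssim p(1-p)$ deterministically. Raising this pointwise bound to the fourth power and taking expectations yields the second claim $\EE[(\D_f\D_eK_{n,p})^4]\lesssim p^4(1-p)^4$. The only step requiring care is the locality argument for $L_{n,p}$; the rest is bookkeeping.
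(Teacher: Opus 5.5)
Your proof is correct and follows essentially the same route as the paper. The paper also splits $K_{n,p}$ into $9\binom{E_{n,p}}{2}+E_{n,p}+L_{n,p}$, uses $\D_f\D_eE_{n,p}=0$ and $|\D_f\D_e\binom{E_{n,p}}{2}|\le p(1-p)$, and controls the leaf term by locality. Your locality step is slightly sharper than the paper's remark about ``at most two nodes'': only the common endpoint in $e\cap f$ can contribute.
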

\begin{proof}
Fix \(e,f\in \binom{[n]}{2}\). Since \(\D_e E_{n,p}=\sqrt{p(1-p)}\), we have \(\D_f\D_e E_{n,p}=0\). On the other hand, the discussion before \eqref{eq:discreteGradientForSets} implies 
\[
\left|\D_f\D_e \binom{E_{n,p}}{2}\right|
\leq p(1-p).
\]

For the leaf term, changing the status of the arc \(e=uv\) can only affect the leaf indicators of the endpoints \(u\) and \(v\). Applying \(\D_f\) once more can therefore only produce a non-zero contribution through the leaf status of at most two nodes, and each such contribution is bounded by \(p(1-p)\). Hence
\[
|\D_f\D_e L_{n,p}|\lesssim p(1-p).
\]
Combining the three terms in the representation of \(K_{n,p}\) yields
\[
|\D_f\D_e K_{n,p}|\lesssim p(1-p),
\]
as claimed.
\end{proof}

We can now prove the normal approximation bound.

\begin{proof}[Proof of Theorem \ref{thm:CosmoEdgesMain} (2)]
Set
\[
F_{n,p}\coloneqq \frac{K_{n,p}-\EE[K_{n,p}]}{\sqrt{\V(K_{n,p})}}.
\]
We apply \Cref{prop:2ndOrderPoincare} to \(F_{n,p}\). By \Cref{lem:CosmoVariance}, we have
\[
\V(K_{n,p})\simeq n^6p^3(1-p),
\qquad\text{if } p\gg n^{-2}.
\]
Moreover, \(\binom{[n]}{2}\) contains of order  \(n^2\) possible arcs.

Using \Cref{lem:CosmoFirstGradient,lem:CosmoSecondGradient}, and Hölder's inequality, we obtain uniformly in the arc indices
\[
\EE\bigl[(\D_e F_{n,p})^4\bigr]
\lesssim
\frac{n^8p^6(1-p)^2}{\V(K_{n,p})^2},
\qquad
\EE\bigl[(\D_f\D_e F_{n,p})^4\bigr]
\lesssim
\frac{p^4(1-p)^4}{\V(K_{n,p})^2}.
\]
Substituting these estimates into the quantities \(B_1,\dots,B_5\) from \Cref{prop:2ndOrderPoincare}, and summing over at most of order \(n^2\), \(n^4\), or \(n^6\) index tuples gives
\[
\begin{aligned}
B_1 &\lesssim \frac{n^{10}p^5(1-p)^3}{\V(K_{n,p})^2},
&\qquad
B_2 &\lesssim \frac{n^6p^3(1-p)^3}{\V(K_{n,p})^2},
&\qquad
B_3 &\lesssim \frac{n^{10}p^5(1-p)}{\V(K_{n,p})^2},\\
B_4 &\lesssim \frac{n^8p^4(1-p)^2}{\V(K_{n,p})^2},
&\qquad
B_5 &\lesssim \frac{n^4p^2(1-p)^2}{\V(K_{n,p})^2},&
\end{aligned}
\]
and hence
\[
B_1\lesssim \frac{1-p}{n^2p},
\qquad
B_2\lesssim \frac{1-p}{n^6p^3},
\qquad
B_3\lesssim \frac{1}{n^2p(1-p)},
\qquad
B_4\lesssim \frac{1}{n^4p^2},
\qquad
B_5\lesssim \frac{1}{n^8p^4}.
\]
In particular, \(B_3\) is the dominant term. Applying \Cref{prop:2ndOrderPoincare}, we conclude that
\[
d_K(F_{n,p},N)\lesssim \sqrt{B_3}\lesssim \frac{1}{n\sqrt{p(1-p)}},
\]
which is the desired estimate.
\end{proof}

\begin{rem}
Compared with the corresponding argument for symmetric edge polytopes in \cite{donzelmann2026centrallimittheoremshigh}, the proof is noticeably shorter. The reason is that the edge count of \(\Cosmological_{n,p}\) depends only on the basic graph statistics \(E_{n,p}\) and \(L_{n,p}\), so that both the first- and second-order discrete gradients can be controlled directly.
\end{rem}

\section{Edges in unimodular triangulations of random cosmological polytopes}
\label{sec:CosmoTriangulations}

We now turn from the edge set of the cosmological polytope itself to the edge set of a unimodular triangulation. Let \(G_{n,p}\) be the Erd\H{o}s--R\'enyi random graph on vertex set \([n]\) with arc probability \(p=p(n)\in(0,1)\), and let \(\Delta_{n,p}\) be a unimodular triangulation of the random cosmological polytope \(\Cosmological_{n,p}\). Throughout this section, we write
\[
    K_{n,p}\coloneqq f_1(\Delta_{n,p})
\]
for the number of edges of this triangulation. Moreover, \(E_{n,p}\) denotes, as before, the number of arcs of \(G_{n,p}\), and we write $\widetilde N_{n,p}\coloneqq|\{v\in[n]:\deg(v)\geq 1\}|$
for the number of non-isolated nodes of \(G_{n,p}\).

\begin{thm}\label{thm:CosmoTriangMain}
Let \(K_{n,p}=f_1(\Delta_{n,p})\) be the number of edges of a unimodular triangulation of the random cosmological polytope \(\Cosmological_{n,p}\). If \(p\gg n^{-2}\), then
\begin{enumerate}
    \item\label{thm:CosmoTriangMain:exp}
    \hfill\(
        \EE[K_{n,p}] \simeq n^4p^2\qquad\text{and}\qquad\VV(K_{n,p})
        \gtrsim
        n^6p^3(1-p).
    \)\hfill\mbox{}

    \item\label{thm:CosmoTriangMain:clt}
    \hfill\(\displaystyle
        d_K\!\left(
        \frac{K_{n,p}-\EE[K_{n,p}]}{\sqrt{\VV(K_{n,p})}},
        N\right)
        \lesssim
        \frac{1}{n\sqrt{p(1-p)}},
    \)\hfill\mbox{}\\[0.4\baselineskip]
    where \(N\sim\mathcal N(0,1)\) is a standard Gaussian random variable.
\end{enumerate}
\end{thm}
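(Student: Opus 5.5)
The plan is to reduce everything to the proof of \Cref{thm:CosmoEdgesMain}. The first step is to turn \Cref{prop:EdgeCharCosmoTriang} into a deterministic formula for $f_1(\mathcal T)$ in terms of $E=|E(G)|$ and the number $\widetilde N$ of non-isolated nodes. Each arc $f=\{i,j\}$ contributes the four lattice points $t_f,y_{ij},y_{ji},\ee_f$, so $\mathcal T$ has $4E+\widetilde N$ vertices. The edges are counted case by case:
\begin{itemize}
\item Pairs of points belonging to different arcs give $16\binom{E}{2}$ edges.
\item The pairs $\{\ee_f,y_{ij}\}$ give $2E$ edges.
\item For a fixed non-isolated node $i$, the point $\ee_i$ is joined to all other vertices except the $\deg(i)$ points $y_{ij}$. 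This gives $4E+\widetilde N-1-\deg(i)$ edges at $i$. Summing over $i$ and correcting for the double count of pairs $\{\ee_i,\ee_j\}$ gives $4E\widetilde N+\widetilde N(\widetilde N-1)-2E-\binom{\widetilde N}{2}$.
\end{itemize}
Adding up, the $\pm 2E$ terms cancel and I expect
\[
K_{n,p}=16\binom{E_{n,p}}{2}+4E_{n,p}\widetilde N_{n,p}+\binom{\widetilde N_{n,p}}{2}.
\]
I will double-check this count, for instance on a single arc. Either way, only the structural features below are used.

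For the expectation, note that $\widetilde N_{n,p}\le \min(n,2E_{n,p})$, so every term is at most of order $E_{n,p}^2$. Hence $\EE[K_{n,p}]\lesssim \EE[E_{n,p}^2]\simeq n^4p^2+n^2p\simeq n^4p^2$ when $p\gg n^{-2}$. The matching lower bound comes from the first term, exactly as in \Cref{lem:CosmoExpectationExact}.

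For the variance lower bound I use monotonicity rather than computing covariances. All three summands $\binom{E_{n,p}}{2}$, $E_{n,p}\widetilde N_{n,p}$ and $\binom{\widetilde N_{n,p}}{2}$ are non-decreasing functions of the arc configuration, since adding an arc never decreases $E$ or $\widetilde N$ and all coefficients are nonnegative. By the Harris--FKG inequality for product measures, all pairwise covariances are therefore nonnegative. This gives $\VV(K_{n,p})\ge 256\,\VV(\binom{E_{n,p}}{2})\simeq n^6p^3(1-p)$ by \eqref{eq:VarChooseEnpAsymptotic}. This is exactly the one-sided statement in \ref{thm:CosmoTriangMain:exp}, which explains why the theorem only claims $\gtrsim$.

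For the CLT I mimic \Cref{lem:CosmoFirstGradient,lem:CosmoSecondGradient}.

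\emph{First-order gradients.} Flipping an arc $e$ changes $E$ by $1$ and $\widetilde N$ by at most $2$. Then $\Delta(E\widetilde N)\lesssim E+\widetilde N+1\lesssim E+1$ and $\Delta\binom{\widetilde N}{2}\lesssim \widetilde N+1\lesssim E+1$, using $\widetilde N\le 2E$ with the configuration-dependent $E$ (this needs care, since $E$ and $\widetilde N$ are evaluated on the flipped configurations). Consequently $|\D_eK_{n,p}|\lesssim\sqrt{p(1-p)}(E_{n,p}+1)$, and $\EE[(\D_eK_{n,p})^4]\lesssim p^2(1-p)^2(n^8p^4+1)\lesssim n^8p^6(1-p)^2$, using $n^8p^4\gg1$.

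\emph{Second-order gradients.} The second differences of $\binom{E}{2}$, $E\widetilde N$ and $\binom{\widetilde N}{2}$ under two flips are bounded by absolute constants, because each of $E,\widetilde N$ moves by $O(1)$ per flip and the functions are quadratic. Hence $|\D_f\D_eK_{n,p}|\lesssim p(1-p)$.

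With these two bounds, the computation of $B_1,\dots,B_5$ in the proof of \Cref{thm:CosmoEdgesMain}~(2) goes through verbatim. There only a lower bound on $\VV(K_{n,p})$ is used, since the variance appears in the denominators. So $d_K\lesssim\sqrt{B_3}\lesssim (n\sqrt{p(1-p)})^{-1}$.

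I expect the main obstacle to be the correct bookkeeping in the edge count from \Cref{prop:EdgeCharCosmoTriang}. In particular, one must confirm that $\ee_i$ does not count as ``corresponding to an arc'', and that all coefficients are nonnegative so that the FKG argument applies. If some term turned out to be non-monotone, I would instead bound its variance directly: its size is $\lesssim \VV(E_{n,p}\widetilde N_{n,p})$, which should be of order at most $n^6p^3$, and I would then argue via Cauchy--Schwarz as in \Cref{lem:CosmoVariance}.
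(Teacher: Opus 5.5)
Your edge-count formula, the expectation bounds, the FKG lower bound $\VV(K_{n,p})\ge 256\,\VV\bigl(\binom{E_{n,p}}{2}\bigr)$ and the first-order bound $|\D_eK_{n,p}|\lesssim\sqrt{p(1-p)}\,(E_{n,p}+1)$ all agree with the paper. Your expectation upper bound via $\widetilde N_{n,p}\le 2E_{n,p}$ is a quicker alternative to the paper's exact formula.

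\textbf{The gap is in the second-order step.} You claim that the mixed differences of $E\widetilde N$ and $\binom{\widetilde N}{2}$ are bounded by absolute constants ``because each of $E,\widetilde N$ moves by $O(1)$ per flip and the functions are quadratic''. This is false. For a product $AB$, the mixed difference contains $B\,\Delta_e\Delta_f A+A\,\Delta_e\Delta_f B$. It is therefore $O(1)$ only if both factors have vanishing mixed differences. That holds for $E$, which is linear in the arc indicators, but not for $\widetilde N$.

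Take $e=\{x,u\}$ and $f=\{x,w\}$, and suppose $x$ has no other incident arcs. Then $x$ is non-isolated in three of the four configurations, so $\Delta_e\Delta_f\widetilde N=-1$. Both $4E\widetilde N$ and $\binom{\widetilde N}{2}$ then contribute with the same sign, and $|\D_f\D_eK_{n,p}|$ is of order $p(1-p)\,(4E_{n,p}+\widetilde N_{n,p})$ on the event $\{\deg_{G-e-f}(x)=0\}$. This event has probability $(1-p)^{n-3}$, which stays bounded away from zero when $p\lesssim n^{-1}$. For instance, for $p=n^{-3/2}$ you get $|\D_f\D_eK_{n,p}|\approx n^{1/2}p(1-p)$ with probability tending to one. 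So $|\D_f\D_eK_{n,p}|\lesssim p(1-p)$ fails.

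Consequently the $B_1,\dots,B_5$ computation from \Cref{thm:CosmoEdgesMain} cannot be reused verbatim. It worked there only because the leaf count $L_{n,p}$ enters $K_{n,p}$ linearly.

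\textbf{What is missing is the case distinction the paper makes.} Write $q=1-p$. For disjoint arcs, $|\D_f\D_eK_{n,p}|\lesssim pq$ does hold. For $e\cap f=\{x\}$, one only has $|\D_f\D_eK_{n,p}|\lesssim pq\,\bigl(1+E_{n,p}\indicator_{\{\deg_{G-e-f}(x)=0\}}\bigr)$. The arcs not incident to $x$ are independent of the isolation event, so this gives $\EE[(\D_f\D_eK_{n,p})^4]\lesssim p^4q^4+n^8p^8q^{n+1}$ in the intersecting case.

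You must then redo $B_1,\dots,B_5$ with the sums split by whether the arcs intersect. Only of order $n^3$ pairs intersect. The resulting extra terms, e.g. $n^{11}p^6q^{n-1}$ in $B_5$ and $n^{12}p^7q^{n}$ in $B_2$, equal $n^{10}p^5q$ times $npq^{n-2}$ or $n^2p^2q^{n-1}$. These factors are bounded uniformly in $p$: bounded for $p\lesssim n^{-1}$ and exponentially small for $p\gg n^{-1}$. Only after this check does $d_K\lesssim\sqrt{B_3}\lesssim (n\sqrt{p(1-p)})^{-1}$ follow. The final rate survives, but your proposal does not contain the argument that secures it.
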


The strategy of the proof follows the same general scheme as in Section \ref{sec:CosmoEdges}. First, we use the deterministic edge count for unimodular triangulations of cosmological polytopes to obtain an explicit representation of \(K_{n,p}\) in terms of elementary graph statistics. We then compute its expectation, estimate its variance from below, and finally establish suitable first- and second-order discrete gradient bounds. These ingredients are then combined with the normal approximation bound from Proposition \ref{prop:2ndOrderPoincare}. 

The deterministic input is the following lemma.

\begin{lemma}\label{lem:CosmoTriangEdgeFormula}
Let \(G=(V,E)\) be a finite graph, and let \(\Delta\) be a unimodular triangulation of \(\Cosmological_G\). If
$\widetilde n \coloneqq |\{v\in V:\deg(v)\geq 1\}|$ denotes the number of non-isolated nodes of \(G\), then
\[
    f_1(\Delta)
    =
    16\binom{|E|}{2}
    +
    \binom{\widetilde n}{2}
    +
    4\widetilde n\,|E|.
\]
\end{lemma}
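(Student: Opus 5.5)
The plan is to first show that $f_1(\Delta)$ does not depend on the chosen unimodular triangulation $\Delta$, and then to count the edges of the specific triangulation $\mathcal{T}$ from \Cref{prop:EdgeCharCosmoTriang}.

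\textbf{Step 1: independence of the triangulation.} This is the standard Betke--McMullen fact. For a unimodular triangulation $\Delta$ of a lattice polytope $P$, the $h$-vector of $\Delta$ coincides with the $h^*$-vector of $P$. The $h^*$-vector depends only on $P$, and the $f$-vector of $\Delta$ is determined linearly by its $h$-vector. Hence all unimodular triangulations of $\Cosmological_G$ have the same $f$-vector, and in particular the same number of edges. I cite this rather than reprove it. Alternatively, $f_0(\Delta)$ is the fixed number of lattice points, and $f_1$ is determined by $h_0,h_1,h_2$, which equal $h^*_0,h^*_1,h^*_2$.

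\textbf{Step 2: organize the vertex set of $\mathcal{T}$.} As recalled before \Cref{prop:EdgeCharCosmoTriang}, the vertices of $\mathcal{T}$ are all lattice points of $\Cosmological_G$. These split into two groups:
\begin{itemize}
\item for each arc $f=\{i,j\}$, the four points $t_f,y_{ij},y_{ji},\ee_f$, which I regard as "corresponding to $f$";
\item the $\widetilde n$ points $\ee_i$, one for each non-isolated node $i$.
\end{itemize}

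\textbf{Step 3: count the three types of edges.}
\begin{enumerate}
\item Pairs of points corresponding to different arcs are all edges. This gives $4\cdot 4\binom{|E|}{2}=16\binom{|E|}{2}$ edges.
\item Within a single arc $f=\{i,j\}$, the edges are exactly $\{\ee_f,y_{ij}\}$ and $\{\ee_f,y_{ji}\}$. This gives $2|E|$ edges.
\item For the node points, pairs $\{\ee_i,\ee_j\}$ of two node points give $\binom{\widetilde n}{2}$ edges. Pairs consisting of $\ee_i$ and an arc point give $4\widetilde n|E|$ edges, minus the excluded pairs $\{\ee_i,y_{ij}\}$. Each arc $\{i,j\}$ excludes exactly two such pairs, namely $\{\ee_i,y_{ij}\}$ and $\{\ee_j,y_{ji}\}$. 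So we must subtract $2|E|$.
\end{enumerate}

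\textbf{Step 4: add up.} The $+2|E|$ from the second type and the $-2|E|$ from the excluded pairs cancel. What remains is exactly
\[
16\binom{|E|}{2}+\binom{\widetilde n}{2}+4\widetilde n|E|.
\]

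\textbf{Main obstacle.} The main difficulty is interpreting \Cref{prop:EdgeCharCosmoTriang} correctly. Clause (a) should apply only to points attached to arcs, not to the $\ee_i$. Clause (c) should be read symmetrically, so that it also covers pairs of two node points $\ee_i,\ee_j$. With this reading, each unordered pair must be counted exactly once across the three clauses. I would check this bookkeeping on a single arc $K_2$. There one expects $f_1=\binom{6}{2}-\#\text{non-edges}$, and the formula gives $1+8=9$. This is consistent with the triangulation of a $5$-point-plus-center configuration, and I would confirm it directly.
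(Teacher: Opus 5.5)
Your proposal is correct. Steps~2--4 are the paper's count with slightly different bookkeeping. The paper sorts the partners of each $\ee_v$ by type: $\binom{\widetilde n}{2}$ node pairs, $2\widetilde n|E|$ pairs with some $t_f$ or $\ee_f$, and $2(\widetilde n-1)|E|$ admissible pairs with $y$-points. It then adds the $2|E|$ pairs $\{\ee_f,y_{ij}\}$. You instead take all $4\widetilde n|E|$ node--arc pairs and let the $2|E|$ excluded pairs cancel the $2|E|$ internal ones.

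Your Step~1 is something the paper does not have. Its proof counts only the edges of the particular triangulation $\mathcal T$ of Proposition~\ref{prop:EdgeCharCosmoTriang}, although the lemma is stated for an arbitrary unimodular $\Delta$. The independence of $f_1(\Delta)$ from the choice of $\Delta$ is used tacitly. Your appeal to Betke--McMullen ($h(\Delta)=h^*(\Cosmological_G)$) is exactly what closes this gap. There is also an elementary version. Since $\Cosmological_G$ lies in the hyperplane of coordinate sum $1$, unimodularity implies that every lattice point of $k\Cosmological_G$ is uniquely of the form $\sum_{v\in\sigma}c_v\,v$, with $\sigma\in\Delta$ and positive integers $c_v$ summing to $k$. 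Hence $|k\Cosmological_G\cap\mathbb{Z}^{V\cup E}|=\sum_{i\geq 0}f_i(\Delta)\binom{k-1}{i}$, and taking $k=2$ gives $f_1(\Delta)=|2\Cosmological_G\cap\mathbb{Z}^{V\cup E}|-|\Cosmological_G\cap\mathbb{Z}^{V\cup E}|$.

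Your planned check for $G=K_2$ needs a correction. The six lattice points are the corners of the triangle $\conv\{t_f,y_{12},y_{21}\}$ together with its edge midpoints $\ee_1=\tfrac12(t_f+y_{12})$, $\ee_2=\tfrac12(t_f+y_{21})$ and $\ee_f=\tfrac12(y_{12}+y_{21})$. This is a twice-dilated unimodular triangle, not a five-point-plus-center configuration. It has four unimodular triangulations:
\begin{itemize}
\item the one with middle triangle $\{\ee_1,\ee_2,\ee_f\}$;
\item three more, each obtained by flipping the diagonal of the parallelogram formed by the middle triangle and one corner triangle.
\end{itemize}
All four have $9$ edges, as the formula predicts, but their edge sets differ. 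This is a concrete reason why Step~1 cannot be skipped.

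The same example shows a labelling issue. $\{\ee_1,y_{12}\}$ is half of a boundary edge, so it is an edge of every triangulation. By contrast, $\{\ee_1,y_{21}\}$ is not an edge of the triangulation described by Proposition~\ref{prop:EdgeCharCosmoTriang}. So clause~(c) as recalled has the indices swapped relative to $y_{ij}=\ee_i-\ee_j+\ee_f$. This does not affect your count, since each $\ee_i$ still misses exactly one $y$-point per incident arc.
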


\begin{proof}
 Each arc of \(G\) gives rise to four vertices of the triangulation, and any two vertices corresponding to different arcs form an edge. Thus pairs of distinct arcs contribute
\(
    16\binom{|E|}{2}
\)
edges. Next, each arc \(f=\{i,j\}\in E\) contributes the two internal edges
$\{\ee_f,y_{ij}\}$ and  $\{\ee_f,y_{ji}\}$.
This gives \(2|E|\) edges.

It remains to count the edges involving a vertex \(\ee_v\), where \(v\) is a non-isolated node of \(G\). The admissible partners are all vertices of the triangulation except those of the form \(y_{uv}\) with \(uv\in E\). Hence, these edges contribute
\[
    \binom{\widetilde n}{2}
    +
    2\widetilde n\,|E|
    +
    2(\widetilde n-1)|E|.
\]
Here, the first term counts pairs of vertices of the form \(\ee_v,\ee_w\), the second term counts pairs of vertices  of the form \(t_f\), and the last term counts the admissible pairs involving vertices of the form \(y_{ij}\).

Combining these contributions yields
\[
\begin{aligned}
    f_1(\Delta)
    &=
    16\binom{|E|}{2}
    +
    2|E|
    +
    \binom{\widetilde n}{2}
    +
    2\widetilde n\,|E|
    +
    2(\widetilde n-1)|E|  
    =
    16\binom{|E|}{2}
    +
    \binom{\widetilde n}{2}
    +
    4\widetilde n\,|E|,
\end{aligned}
\]
as claimed.
\end{proof}

Applied to \(G_{n,p}\), Lemma \ref{lem:CosmoTriangEdgeFormula} gives the representation
\begin{equation}\label{eq:CosmoTriangKRepresentation}
    K_{n,p}
    =
    16\binom{E_{n,p}}{2}
    +
    \binom{\widetilde N_{n,p}}{2}
    +
    4\widetilde N_{n,p}E_{n,p}.
\end{equation}
Thus, the probabilistic analysis of \(K_{n,p}\) reduces to the joint behaviour of the number of arcs and the number of non-isolated nodes of \(G_{n,p}\).

\subsection{Expectation}\label{sec:CosmoTriangExpectation}

We begin with the expectation of \(K_{n,p}\). 
By
\eqref{eq:CosmoTriangKRepresentation}, the random variable \(K_{n,p}\)
is expressed in terms of the number of arcs \(E_{n,p}\) and the number
\(\widetilde N_{n,p}\) of non-isolated nodes of \(G_{n,p}\). In particular,
\[
    \EE[K_{n,p}]
    =
    16\,\EE\!\left[\binom{E_{n,p}}{2}\right]
    +
    \EE\!\left[\binom{\widetilde N_{n,p}}{2}\right]
    +
    4\,\EE[\widetilde N_{n,p}E_{n,p}].
\]
Thus, it remains
to compute these expectations.

\begin{lemma}\label{lem:CosmoTriangExpectationExact}
Let \(K_{n,p}=f_1(\Delta_{n,p})\) be the number of edges of a unimodular
triangulation of \(\Cosmological_{n,p}\). Then,
\[
\begin{aligned}
\EE[K_{n,p}]
&=
16\binom{\binom{n}{2}}{2}p^2 
+
\binom{n}{2}
\left(
1
-
2(1-p)^{n-1}
+
(1-p)^{2n-3}
\right)  \\
&\qquad
+
4n(n-1)p
+
4n\binom{n-1}{2}p\left(1-(1-p)^{n-1}\right).
\end{aligned}
\]
\end{lemma}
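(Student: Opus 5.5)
We compute the three expectations in the decomposition
\[
\EE[K_{n,p}]=16\,\EE\!\left[\binom{E_{n,p}}{2}\right]+\EE\!\left[\binom{\widetilde N_{n,p}}{2}\right]+4\,\EE[\widetilde N_{n,p}E_{n,p}]
\]
separately, writing each as a sum of indicator functions and using linearity.

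\emph{First term.} As in the proof of \Cref{lem:CosmoExpectationExact}, \(\binom{E_{n,p}}{2}\) counts unordered pairs of present arcs. Hence \(\EE[\binom{E_{n,p}}{2}]=\binom{\binom n2}{2}p^2\).

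\emph{Second term.} The quantity \(\binom{\widetilde N_{n,p}}{2}\) counts unordered pairs \(\{v,w\}\) of distinct nodes that are both non-isolated. By inclusion--exclusion,
\[
\PP(\deg v\ge1,\deg w\ge1)=1-2\PP(\deg v=0)+\PP(\deg v=\deg w=0).
\]
Here \(\PP(\deg v=0)=(1-p)^{n-1}\). For both nodes to be isolated, the union of their incident arcs must be absent, and this union contains \(2(n-1)-1=2n-3\) arcs because \(vw\) is shared. Summing over the \(\binom n2\) pairs gives the second line of the formula. The only point needing care is this count of \(2n-3\) arcs.

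\emph{Third term.} We write
\[
\widetilde N_{n,p}E_{n,p}=\sum_{v\in[n]}\sum_{e\in\binom{[n]}{2}}\indicator_{\{\deg v\ge1\}}\indicator_{\{e\in E(G_{n,p})\}}
\]
and split according to whether \(v\in e\).

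If \(v\in e\), then the presence of \(e\) already forces \(\deg v\ge1\), so the summand has expectation \(p\). There are \(n(n-1)\) such pairs \((v,e)\), which gives \(n(n-1)p\).

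If \(v\notin e\), then \(e\) is independent of all arcs incident to \(v\), so the summand has expectation \(p\bigl(1-(1-p)^{n-1}\bigr)\). There are \(n\binom{n-1}{2}\) such pairs.

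Multiplying by \(4\) yields \(4n(n-1)p+4n\binom{n-1}{2}p\bigl(1-(1-p)^{n-1}\bigr)\).

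Adding the three contributions gives exactly the claimed expression. The step most prone to error is the bookkeeping in the second and third terms: correctly handling the shared arc \(vw\) in the double-isolation probability, and the case split \(v\in e\) versus \(v\notin e\), which is what produces the separate \(4n(n-1)p\) term.
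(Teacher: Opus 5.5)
Your proposal is correct and follows essentially the same route as the paper's proof. It uses the same indicator decompositions and the same inclusion--exclusion with the count of $2n-3$ arcs for the double-isolation probability, and it splits the mixed term the same way into the $n(n-1)$ incident and $n\binom{n-1}{2}$ non-incident pairs $(v,e)$.
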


\begin{proof}
Since \(E_{n,p}\sim\mathrm{Bin}\!\left(\binom{n}{2},p\right)\), we have
\[
    \EE\!\left[\binom{E_{n,p}}{2}\right]
    =
    \binom{\binom{n}{2}}{2}p^2.
\]
Next, we compute the expectation of
\(\binom{\widetilde N_{n,p}}{2}\). Distinct nodes \(u,v\in[n]\) are non-isolated unless at least one of them is isolated.
Hence, by the inclusion-exclusion principle,
\[
\PP(u\text{ and }v\text{ are non-isolated})
=
1
-
2(1-p)^{n-1}
+
(1-p)^{2n-3}.
\]
Indeed, the probability that a fixed node is isolated is
\((1-p)^{n-1}\), while the probability that both \(u\) and \(v\) are
isolated is \((1-p)^{2n-3}\), since precisely \(2n-3\) arcs are incident
to at least one of the two nodes. Therefore
\[
    \EE\!\left[\binom{\widetilde N_{n,p}}{2}\right]
    =
    \binom{n}{2}
    \left(
    1
    -
    2(1-p)^{n-1}
    +
    (1-p)^{2n-3}
    \right).
\]

It remains to compute \(\EE[\widetilde N_{n,p}E_{n,p}]\). We write
\[
    \widetilde N_{n,p}
    =
    \sum_{v\in[n]}
    \indicator_{\{\deg(v)\geq 1\}},
    \qquad
    E_{n,p}
    =
    \sum_{e\in\binom{[n]}{2}}
    \indicator_{\{e\in E(G_{n,p})\}},
\]
so that
\[
    \EE[\widetilde N_{n,p}E_{n,p}]
    =
    \sum_{v\in[n]}
    \sum_{e\in\binom{[n]}{2}}
    \PP\bigl(\deg(v)\geq 1,\ e\in E(G_{n,p})\bigr).
\]
We distinguish whether an arc \(e\) is incident to a node  \(v\) or not. If \(e\) is
incident to \(v\), then the event \(e\in E(G_{n,p})\) already implies
\(\deg(v)\geq 1\), and this event occurs with  probability \(p\). There are
\(n(n-1)\) such ordered pairs \((v,e)\). If \(e\) is not incident to
\(v\), then \(e\in E(G_{n,p})\) is independent of the event
\(\{\deg(v)\geq 1\}\). Hence, the corresponding probability is
$p\left(1-(1-p)^{n-1}\right)$.
For each fixed \(v\), there are \(\binom{n-1}{2}\) arcs not incident to
\(v\). Consequently,
\[
    \EE[\widetilde N_{n,p}E_{n,p}]
    =
    n(n-1)p
    +
    n\binom{n-1}{2}p\left(1-(1-p)^{n-1}\right).
\]
Combining the three identities proves the lemma.
\end{proof}

\begin{proof}[Proof of Theorem \ref{thm:CosmoTriangMain} (a), expectation]
Recall the exact formula for $\EE[K_{n,p}]$ from Lemma \ref{lem:CosmoTriangExpectationExact}. The first term is of order \(n^4p^2\). We now show that the remaining
terms are bounded by terms of the same order. Using
\(1-(1-p)^{n-1}\leq (n-1)p\), we obtain
\[
    n\binom{n-1}{2}p\left(1-(1-p)^{n-1}\right)
    \lesssim n^4p^2.
\]
Moreover, for the second term in the exact formula, we use the identity
\[
\begin{aligned}
1
-
2(1-p)^{n-1}
+
(1-p)^{2n-3}
&=
\left(1-(1-p)^{n-1}\right)^2
+
p(1-p)^{2n-3}.
\end{aligned}
\]
Hence
\[
\begin{aligned}
    \binom{n}{2}
    \left(
    1
    -
    2(1-p)^{n-1}
    +
    (1-p)^{2n-3}
    \right)
    &\leq
    \binom{n}{2}
    \left(n^2p^2+p\right)
    \lesssim
    n^4p^2+n^2p
    \lesssim
    n^4p^2,
\end{aligned}
\]
where the last step uses \(p\gg n^{-2}\). Finally,
\(n(n-1)p\lesssim n^4p^2\) also follows from \(p\gg n^{-2}\). Thus all
remaining terms are bounded by a constant multiple of \(n^4p^2\), while
the first term is itself of order \(n^4p^2\). This proves $\EE[K_{n,p}]\simeq n^4p^2$
whenever \(p\gg n^{-2}\).
\end{proof}

\subsection{Variance}\label{sec:CosmoTriangVariance}

We next turn to the variance of \(K_{n,p}\) and to the proof of the second claim in Theorem \ref{thm:CosmoEdgesMain} (a). In contrast to the expectation,
we do not derive an exact formula. Instead, we develop a lower bound of the correct order, which is enough for the proof of the quantitative central limit theorem. 

The representation \eqref{eq:CosmoTriangKRepresentation} shows that
\(K_{n,p}\) contains three different contributions: a quadratic term in the
number of arcs, a quadratic term in the number of non-isolated nodes, and a
mixed term. The variance estimate below reflects these sources of
fluctuations. In this subsection and the next one
we write $q\coloneqq 1-p$.

\begin{lemma}\label{lem:CosmoTriangVarianceLowerBound}
Let \(K_{n,p}=f_1(\Delta_{n,p})\) be the number of edges of a unimodular
triangulation of \(\Cosmological_{n,p}\). Then,
\[
    \VV(K_{n,p})
    \gtrsim
    n^6p^3q
    +
    n^5p^2q^{n-1}(1-q^{n-1})
    +
    n^3pq(1-q^{n-1}).
\]
In particular, if \(p\gg n^{-2}\), then 
\[
    \VV(K_{n,p})
    \gtrsim
    n^6p^3q .
\]
\end{lemma}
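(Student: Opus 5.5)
The plan is to exploit monotonicity. By \eqref{eq:CosmoTriangKRepresentation},
\[
K_{n,p}=16\binom{E_{n,p}}{2}+\binom{\widetilde N_{n,p}}{2}+4\widetilde N_{n,p}E_{n,p}.
\]
Each of the three summands is a non-decreasing function of the arc indicators, because adding an arc never decreases $E_{n,p}$ or $\widetilde N_{n,p}$, and both are non-negative. By the Harris--FKG inequality for product measures on $\{0,1\}^{\binom{[n]}{2}}$, every pairwise covariance between these summands is non-negative. Hence
\[
\VV(K_{n,p})\ \ge\ 256\,\VV\!\left(\binom{E_{n,p}}{2}\right)+\VV\!\left(\binom{\widetilde N_{n,p}}{2}\right)+16\,\VV(\widetilde N_{n,p}E_{n,p}),
\]
so it suffices to bound each variance on the right from below separately.

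The first variance is already computed exactly in the proof of \Cref{lem:CosmoVariance}. By \eqref{eq:VarChooseEnp} it is $\gtrsim n^6p^3q$, which gives the first term of the claimed bound. This alone also yields the ``in particular'' statement: for $p\gg n^{-2}$ the bound $\VV(K_{n,p})\gtrsim n^6p^3q$ follows at once, and that is all \Cref{thm:CosmoTriangMain} needs.

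For the two remaining terms I would use the first-order Hoeffding (Efron--Stein) projection lower bound. For a functional $F$ of independent variables, the projections $\EE[F\mid X_e]-\EE[F]$ are orthogonal, so
\[
\VV(F)\ \ge\ \sum_{e}\VV\bigl(\EE[F\mid X_e]\bigr)=\sum_e pq\,\bigl(\EE[F^+_e]-\EE[F^-_e]\bigr)^2 .
\]
The mean increments are explicit. Switching on $e=uv$ raises $\widetilde N_{n,p}$ by the number of endpoints of $e$ that are otherwise isolated, which has expectation $2q^{n-2}$. It also raises $E_{n,p}$ by one. Expanding $F^+_e-F^-_e$ for $F=\widetilde N_{n,p}E_{n,p}$ and for $F=\binom{\widetilde N_{n,p}}{2}$ then expresses the increments in terms of $\EE[E_{n,p}]\simeq n^2p$, $\EE[\widetilde N_{n,p}]\simeq n(1-q^{n-1})$ and $q^{n-2}$. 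For instance, the increment for $\widetilde N_{n,p}E_{n,p}$ contains the non-negative parts $\EE[\widetilde N_{n,p}]$ and $2q^{n-2}\EE[E_{n,p}]$, all contributions having the same sign.

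As an alternative route to the middle term, I would use the linearisation
\[
\widetilde N_{n,p}E_{n,p}\approx \EE[E_{n,p}]\,\widetilde N_{n,p}+\EE[\widetilde N_{n,p}]\,E_{n,p}.
\]
Combined with the estimate $\VV(\widetilde N_{n,p})\gtrsim nq^{n-1}(1-q^{n-1})$, which holds by the non-negativity of the isolation covariances, this produces the term $n^4p^2\cdot nq^{n-1}(1-q^{n-1})=n^5p^2q^{n-1}(1-q^{n-1})$. The term $n^3pq(1-q^{n-1})$ should come analogously from the $\EE[\widetilde N_{n,p}]$-part of the increment, that is, from the fluctuations of $E_{n,p}$ weighted by the number of non-isolated nodes.

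The main obstacle is exactly these two secondary terms. One must check that the chosen projection or linearisation really captures the factors $q^{n-1}(1-q^{n-1})$ and $1-q^{n-1}$, and that no cancellation occurs in the increments. Since all contributions are non-negative by monotonicity, I expect no cancellation, but the bookkeeping of the exponents of $q$ needs care. If a term resists this approach, I would fall back on the weaker statement, since only the dominant term $n^6p^3q$ enters \Cref{thm:CosmoTriangMain}.
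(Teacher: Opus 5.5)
Your first step is the paper's: FKG for the three increasing summands, discarding $\binom{\widetilde N_{n,p}}{2}$, and the bound $\VV\bigl(\binom{E_{n,p}}{2}\bigr)\gtrsim n^6p^3q$. This already gives the ``in particular'' part.

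For the two secondary terms you take a genuinely different route. The paper expands $Z=\widetilde N_{n,p}E_{n,p}=\sum_{v}\sum_{e}\indicator_{\{e\in E(G_{n,p})\}}\indicator_{\{\deg(v)\ge 1\}}$ and applies FKG a second time, so that every covariance in this double sum is non-negative. It then keeps two explicit families. The first consists of the $\asymp n^5$ triples $(v,e,f)$ with $e\neq f$ not incident to $v$, each with covariance exactly $p^2q^{n-1}(1-q^{n-1})$. The second consists of the $\asymp n^3$ diagonal terms with $v\notin e$, each with variance at least $pq(1-q^{n-1})$.

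Your first-order Hoeffding bound $\VV(Z)\ge\sum_e pq\,(\EE[Z_e^+]-\EE[Z_e^-])^2$ also works. The obstacle you flag disappears once the increment is written exactly. For $e=uv$,
\[
Z_e^+-Z_e^-=\widetilde N_e^-+(I_u+I_v)(E_e^-+1),
\]
where the superscript minus refers to $G_{n,p}-e$ and $I_u$ indicates that $u$ has no neighbour in $G_{n,p}-e$. Hence $\EE[Z_e^+]-\EE[Z_e^-]=a+b$, with $a=\EE[\widetilde N_e^-]\ge(n-2)(1-q^{n-1})$ and $b=2q^{n-2}\bigl(1+\binom{n-1}{2}p\bigr)$. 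Note that $I_u$ and $E_e^-$ are dependent, so $b$ is not literally $2q^{n-2}\EE[E_{n,p}]$, although it has the same order.

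The cross term $2ab\gtrsim n^3p\,q^{n-2}(1-q^{n-1})$, multiplied by $\binom n2 pq$, gives exactly $n^5p^2q^{n-1}(1-q^{n-1})$. Moreover $a+b\ge 1$ for $n\ge 4$: either $q^{n-2}\ge\frac12$, so $b\ge 1$, or $1-q^{n-1}>\frac12$, so $a\ge\frac{n-2}{2}$. Therefore $(a+b)^2\ge a$, which gives $n^3pq(1-q^{n-1})$.

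The paper's route needs only exact covariances of products of indicators and no case distinction. Your projection needs the explicit mean increments and a small case split, but it is more systematic and yields extra terms for free; for instance, $a^2$ contributes $n^4pq(1-q^{n-1})^2$.

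Your linearisation alternative is only heuristic. The remainder $(\widetilde N_{n,p}-\EE\widetilde N_{n,p})(E_{n,p}-\EE E_{n,p})$ is not controlled, so drop it in favour of the projection argument.
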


The first term in Lemma~\ref{lem:CosmoTriangVarianceLowerBound} reflects the
fluctuation coming from the quadratic arc-count contribution
\(\binom{E_{n,p}}{2}\). The remaining two terms come from the mixed
contribution involving the number of non-isolated nodes. The quadratic term
\(\binom{\widetilde N_{n,p}}{2}\) is present in the representation of
\(K_{n,p}\), but it is not needed for the lower bound and will be discarded. The factor
\(1-q^{n-1}\) is the probability that a fixed node is non-isolated.

\begin{proof}[Proof of Lemma~\ref{lem:CosmoTriangVarianceLowerBound}]
Defining
\[
    X_{n,p}\coloneqq \binom{E_{n,p}}{2},
    \qquad
    Y_{n,p}\coloneqq \binom{\widetilde N_{n,p}}{2},
    \qquad
    Z_{n,p}\coloneqq \widetilde N_{n,p}E_{n,p},
\]
we have
\[
    K_{n,p}=16X_{n,p}+Y_{n,p}+4Z_{n,p}
\]
by Lemma \ref{lem:CosmoTriangEdgeFormula}.
All three random variables are increasing functions of the arc indicators.
Hence, by the FKG inequality for product Bernoulli measures, all covariance
terms between them are non-negative, see \cite[Theorem 4.11]{GrimmettProbabilityOnGraphs} or \cite[Problem 3.11.18]{GrimmettStirzaker1000Exercises}. Consequently,
\[
    \VV(K_{n,p})\gtrsim \VV(X_{n,p})+ \VV(Y_{n,p})+ \VV(Z_{n,p}) \gtrsim \VV(X_{n,p})+\VV(Z_{n,p}).
\]

We first consider \(X_{n,p}\). This random variable counts unordered pairs
of present arcs. If two such pairs have exactly one arc in common, the
covariance of their indicators is \(p^3q\). Since the number of such pairs
is of order \(n^6\), we obtain $\VV(X_{n,p})\gtrsim n^6p^3q$.

It remains to extract the additional contribution coming from \(Z_{n,p}\).
We write
\[
    Z_{n,p}
    =
    \sum_{v\in[n]}
    \sum_{e\in\binom{[n]}{2}}
    \indicator_{\{e\in E(G_{n,p})\}}
    \indicator_{\{\deg(v)\geq 1\}} .
\]
Again, all summands are increasing functions of the arc indicators, so all
covariances in this double sum are non-negative by the FKG inequality. We may therefore keep only
selected classes of terms. First, let us fix a node \(v\) and two distinct arcs \(e,f\) which are not incident
to \(v\). There are of order \(n^5\) such triples. The arc indicators of
\(e\) and \(f\) are independent of the event \(\{\deg(v)\geq 1\}\). Thus,
\[
\begin{aligned}
&\Cov\!\left(
    \indicator_{\{e\in E(G_{n,p})\}}\indicator_{\{\deg(v)\geq 1\}},
    \indicator_{\{f\in E(G_{n,p})\}}\indicator_{\{\deg(v)\geq 1\}}
\right)  
=
p^2q^{n-1}(1-q^{n-1}).
\end{aligned}
\]
This gives
\[
    \VV(Z_{n,p})
    \gtrsim
    n^5p^2q^{n-1}(1-q^{n-1}).
\]
Second, we keep the diagonal terms with \(v\notin e\). There are of order
\(n^3\) such pairs \((v,e)\). For each of them, the product $\indicator_{\{e\in E(G_{n,p})\}}\indicator_{\{\deg(v)\geq 1\}}$
is a Bernoulli random variable with parameter \(p(1-q^{n-1})\). Its variance
is therefore bounded from below by
\[
    p(1-q^{n-1})\bigl(1-p(1-q^{n-1})\bigr)
    \geq
    pq(1-q^{n-1}).
\]
Hence $\VV(Z_{n,p})\gtrsim n^3pq(1-q^{n-1})$.
Combining these estimates proves
\[
    \VV(K_{n,p})
    \gtrsim
    n^6p^3q
    +
    n^5p^2q^{n-1}(1-q^{n-1})
    +
    n^3pq(1-q^{n-1}),
\]
as claimed.

It remains to justify the final assertion. The first term is already
\(n^6p^3q\). For the second term, using
\(1-q^{n-1}\leq (n-1)p\), we get
\[
    n^5p^2q^{n-1}(1-q^{n-1})
    \leq
    n^6p^3q^{n-1}
    \lesssim
    n^6p^3q .
\]
For the third term, we use
$1-q^{n-1}\leq \min\{1,np\}$.
If \(p\leq n^{-1}\), then
\[
    n^3pq(1-q^{n-1})
    \leq
    n^4p^2q
    =
    \frac{1}{n^2p}\,n^6p^3q
    \lesssim
    n^6p^3q,
\]
where the last estimate follows from \(p\gg n^{-2}\). If, on the other hand, \(p>n^{-1}\), then
\[
    n^3pq(1-q^{n-1})
    \leq
    n^3pq
    \leq
    n^6p^3q .
\]
Thus, for \(p\gg n^{-2}\), the right-hand side in the variance bound is of
order \(n^6p^3q\), and it follows that
\[
    \VV(K_{n,p})\gtrsim n^6p^3q .
\]
This completes the proof.
\end{proof}

\subsection{Discrete gradients and a quantitative central limit theorem}
\label{sec:CosmoTriangCLT}

We now estimate the discrete gradients of \(K_{n,p}\) and prove the
quantitative central limit theorem. As in Section~\ref{sec:prelim_prob}, we
encode the random graph \(G_{n,p}\) by independent Rademacher variables
indexed by the possible arcs \(e\in\binom{[n]}{2}\). 
For an arc \(e\in\binom{[n]}{2}\), the operator \(\D_e\) denotes the
corresponding discrete gradient. We also write $q\coloneqq 1-p$ again. 

The representation \eqref{eq:CosmoTriangKRepresentation}
shows that the only new ingredient, compared with Section~\ref{sec:CosmoCLT},
is the number of non-isolated nodes. We first record the relevant bounds for the discrete gradients.

\begin{lemma}\label{lem:CosmoTriangFirstGradient}
For every \(e\in\binom{[n]}{2}\),
\[
    \EE\bigl[(\D_e K_{n,p})^4\bigr]
    \lesssim
    n^8p^6q^2 .
\]
\end{lemma}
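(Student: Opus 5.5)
We mirror the proof of \Cref{lem:CosmoFirstGradient}, working term by term with the representation \eqref{eq:CosmoTriangKRepresentation},
\[
K_{n,p}=16\binom{E_{n,p}}{2}+\binom{\widetilde N_{n,p}}{2}+4\widetilde N_{n,p}E_{n,p}.
\]
Fix $e=uv\in\binom{[n]}{2}$. Flipping the state of $e$ changes $E_{n,p}$ by exactly one. It changes $\widetilde N_{n,p}$ by at most two, since only the endpoints $u,v$ can switch between isolated and non-isolated. Writing $\widetilde N^\pm,E^\pm$ for the values with $e$ switched on or off, we have $0\le \widetilde N^\pm\le n$ and $E^\pm\le E_{n,p}+1$. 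This gives the following pointwise bounds:
\begin{align*}
\Bigl|\D_e\tbinom{E_{n,p}}{2}\Bigr| &\le \sqrt{pq}\,E_{n,p},\\
\Bigl|\D_e\tbinom{\widetilde N_{n,p}}{2}\Bigr| &\le \sqrt{pq}\,\bigl(2\widetilde N_{n,p}+4\bigr)\lesssim \sqrt{pq}\,n,\\
|\D_e(\widetilde N_{n,p}E_{n,p})| &\le \sqrt{pq}\,\bigl(|\widetilde N^+-\widetilde N^-|\,E^+ + \widetilde N^-|E^+-E^-|\bigr)\lesssim \sqrt{pq}\,(E_{n,p}+1+n).
\end{align*}
For the product term we used $\widetilde N^+E^+-\widetilde N^-E^-=(\widetilde N^+-\widetilde N^-)E^++\widetilde N^-(E^+-E^-)$. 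Summing up,
\[
|\D_eK_{n,p}|\lesssim \sqrt{pq}\,(E_{n,p}+n+1),
\qquad
(\D_eK_{n,p})^4\lesssim p^2q^2\,(E_{n,p}^4+n^4+1).
\]

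Taking expectations and using $\EE[E_{n,p}^4]\lesssim n^8p^4+n^2p$ (the binomial fourth moment, whose leading term is $n^8p^4$ when $n^2p\gtrsim 1$), we obtain
\[
\EE\bigl[(\D_eK_{n,p})^4\bigr]\lesssim p^2q^2\bigl(n^8p^4+n^4\bigr).
\]
It remains to absorb $n^4$ into $n^8p^4$, which requires $n^4p^4\gtrsim 1$, that is, $p\gtrsim n^{-1}$. This is the main obstacle. The mixed term $4\widetilde N_{n,p}E_{n,p}$ contributes a deterministic-size increment of order $\widetilde N_{n,p}$ whenever $e$ flips, and $\widetilde N_{n,p}\approx n\min\{1,np\}$ need not be dominated by $E_{n,p}\approx n^2p$ when $p$ is small.

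To handle the obstacle, we first replace the crude bound $\widetilde N\le n$ by $\widetilde N_{n,p}\le 2E_{n,p}$, since every non-isolated node is an endpoint of an arc. Then every term above is bounded by $\sqrt{pq}\,(E_{n,p}+1)$ up to constants, so
\[
\EE\bigl[(\D_eK_{n,p})^4\bigr]\lesssim p^2q^2\bigl(\EE[E_{n,p}^4]+1\bigr)\lesssim p^2q^2\bigl(n^8p^4+1\bigr).
\]
Under $p\gg n^{-2}$ we have $n^8p^4\gg1$, which yields the claimed bound $n^8p^6q^2$. The point to check carefully is that $\widetilde N^\pm\le 2E^\pm\le 2(E_{n,p}+1)$ holds for both flipped configurations, so that the bound is pointwise in the environment outside $e$. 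If the lemma is also meant to hold without the assumption $p\gg n^{-2}$, the additive constant term would have to be tracked separately.
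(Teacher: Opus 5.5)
Your proof is correct and essentially the paper's argument. Once you replace $\widetilde N_{n,p}\le n$ by $\widetilde N_{n,p}\le 2E_{n,p}$, you get the same pointwise bound $|\D_eK_{n,p}|\lesssim\sqrt{pq}\,(E_{n,p}+1)$, followed by the binomial fourth moment $\EE[E_{n,p}^4]\lesssim n^8p^4$ for $p\gg n^{-2}$. The paper expands the product terms via Privault's product formula, whereas you use the elementary identity $\widetilde N^+E^+-\widetilde N^-E^-=(\widetilde N^+-\widetilde N^-)E^++\widetilde N^-(E^+-E^-)$. Your remark that $p\gg n^{-2}$ is genuinely needed to absorb the additive constant matches the paper's phrase ``in the regime considered here''.
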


\begin{proof}
Fix \(e=\{u,v\}\in\binom{[n]}{2}\). Changing the state of \(e\) changes
\(E_{n,p}\) by exactly one. Hence $|\D_e E_{n,p}|=\sqrt{pq}$.
Moreover,
\[
    \left|\D_e\binom{E_{n,p}}{2}\right|
    \leq
    \sqrt{pq}\,E_{n,p}.
\]

For the other terms, we use the following product identity from \cite[Proposition 7.8]{Privault2008}, which says that
\begin{equation}\label{eq:ProductEstimate} 
    \D_e(AB)
    =
    (\D_e A)B_e
    +
    A_e(\D_e B)
    +
    \frac{1}{\sqrt{pq}}(\D_e A)(\D_e B)
\end{equation}
for any two functionals $A$ and $B$ of the arc variables.

We next consider \(\widetilde N_{n,p}\). Changing the state of \(e\) can
only affect the isolation status of the two endpoints \(u\) and \(v\). Thus
$|\D_e\widetilde N_{n,p}|\leq 2\sqrt{pq}$. Consequently, using \eqref{eq:ProductEstimate} we obtain
\begin{align}\label{eq:DeNonIsolated}
    \left|\D_e\binom{\widetilde N_{n,p}}{2}\right| &=  \frac12 \left |
    (\D_e\widetilde N_{n,p})(\widetilde N_{n,p} -1)
    +
    \widetilde N_{n,p}(\D_e\widetilde N_{n,p}-1) 
    +
    \frac{1}{\sqrt{pq}}(\D_e\widetilde N_{n,p})^2\right | \\ 
    &\lesssim \sqrt{pq}(\widetilde N_{n,p}+1).\nonumber
\end{align}

 
For the mixed term using the triangle inequality and \eqref{eq:ProductEstimate}  with
\(A=\widetilde N_{n,p}\) and \(B=E_{n,p}\)  yields
\[
\begin{aligned}
    |\D_e(\widetilde N_{n,p}E_{n,p})|
    &\lesssim
    |\D_e\widetilde N_{n,p}|\,E_{n,p}
    +
    \widetilde N_{n,p}|\D_eE_{n,p}|
    +
    \frac{1}{\sqrt{pq}}
    |\D_e\widetilde N_{n,p}|\,|\D_eE_{n,p}|  \\
    &\lesssim
    \sqrt{pq}\,(E_{n,p}+\widetilde N_{n,p}+1).
\end{aligned}
\]
Since every non-isolated node is incident to at least one present arc, we
have \(\widetilde N_{n,p}\leq 2E_{n,p}\). Combining the preceding estimates
therefore yields
\[
    |\D_eK_{n,p}|
    \lesssim
    \sqrt{pq}\,(E_{n,p}+1).
\]
It follows that
\[
    (\D_eK_{n,p})^4
    \lesssim
    p^2q^2\,(E_{n,p}^4+1).
\]
Since $E_{n,p}\sim \mathrm{Bin}\!\left(\binom{n}{2},p\right)$,
we have $\EE[E_{n,p}^4]\lesssim n^8p^4$
in the regime considered here. Hence,
\[
    \EE\bigl[(\D_eK_{n,p})^4\bigr]
    \lesssim
    p^2q^2\, n^8p^4
    =
    n^8p^6q^2,
\]
as claimed.
\end{proof}

The second-order gradients have a mild dependence on whether the two arcs
share an endpoint.

\begin{lemma}\label{lem:CosmoTriangSecondGradient}
For all \(e,f\in\binom{[n]}{2}\) with $e\neq f$,
\[
    \EE\bigl[(\D_f\D_eK_{n,p})^4\bigr]
    \lesssim
    \begin{cases}
        p^4q^4,
        &\text{if } e\cap f=\emptyset,\\[0.2cm]
        p^4q^4+n^8p^8q^{\,n+1},
        &\text{if } |e\cap f|=1.
    \end{cases}
\]
\end{lemma}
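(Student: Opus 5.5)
Fix distinct arcs \(e,f\) and use the representation \eqref{eq:CosmoTriangKRepresentation},
\[
K_{n,p}=16X_{n,p}+Y_{n,p}+4Z_{n,p},
\qquad
X_{n,p}=\binom{E_{n,p}}{2},\quad
Y_{n,p}=\binom{\widetilde N_{n,p}}{2},\quad
Z_{n,p}=\widetilde N_{n,p}E_{n,p}.
\]
We treat the three terms separately and combine them via \((a+b+c)^4\lesssim a^4+b^4+c^4\).

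\emph{Arc-count term.} As in \Cref{lem:CosmoSecondGradient}, \(\D_f\D_eX_{n,p}\) is deterministic of size \(pq\), since \(X^{++}-X^{+-}-X^{-+}+X^{--}=1\).

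\emph{Reduction for the other two terms.} Write \(\widetilde N_{n,p}=n-I_{n,p}\), where \(I_{n,p}\) is the number of isolated nodes. The \(\pm\) changes in \(e\) and \(f\) only affect the isolation indicators of endpoints of \(e\) and \(f\). Let \(\delta_e\coloneqq \widetilde N^{+}_e-\widetilde N^-_e\in\{0,1,2\}\), and let \(\Delta_{ef}\) denote the second difference of \(\widetilde N_{n,p}\). Expanding the second differences of \(Y_{n,p}\) and \(Z_{n,p}\) algebraically, as with \eqref{eq:ProductEstimate} applied twice, gives a bounded combination of \(\delta_e\delta_f\) and \(\delta_e,\delta_f\), together with the terms \(\Delta_{ef}\cdot \widetilde N\) and \(\Delta_{ef}\cdot E_{n,p}\) (note \(\D_f\D_e E_{n,p}=0\)). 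Every term without \(\Delta_{ef}\) is bounded, so after the factor \(pq\) it contributes \(O(pq)\).

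\emph{Disjoint arcs.} If \(e\cap f=\emptyset\), the isolation indicators of the endpoints of \(e\) depend only on arcs other than \(f\), and vice versa. Hence \(\widetilde N\) is additive in the two switches and \(\Delta_{ef}=0\). The second difference is therefore bounded, which gives \(\EE[(\D_f\D_eK_{n,p})^4]\lesssim p^4q^4\).

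\emph{Arcs sharing a node \(w\).} Write \(e=\{u,w\}\) and \(f=\{w,v\}\). The isolation indicator of \(w\) is \(1-\indicator\{e\}\indicator\{f\}\indicator\{\text{all other }n-3\text{ arcs at }w\text{ absent}\}\). Its second difference in \((e,f)\) is \(\pm\indicator_{A_w}\), where \(A_w\) is the event that the other \(n-3\) arcs at \(w\) are absent; the endpoints \(u,v\) contribute additively. Thus \(|\Delta_{ef}|\le\indicator_{A_w}\), and the dangerous term is bounded by
\[
pq\,\indicator_{A_w}\,\bigl(E_{n,p}+\widetilde N_{n,p}+1\bigr)\lesssim pq\,\indicator_{A_w}(E_{n,p}+1),
\]
using \(\widetilde N_{n,p}\le 2E_{n,p}+2\) (the \(+2\) accounts for the two switched arcs). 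Its fourth moment is
\[
p^4q^4\,\EE\bigl[\indicator_{A_w}(E+1)^4\bigr]=p^4q^4\,q^{n-3}\,\EE\bigl[(E'+O(1))^4\bigr],
\]
where \(E'\) counts arcs not at \(w\) and is independent of \(A_w\). Since \(\EE[E'^4]\lesssim n^8p^4+1\), this yields the bound
\[
p^4q^{n+1}\bigl(n^8p^4+1\bigr)\lesssim n^8p^8q^{n+1}+p^4q^4,
\]
which matches the statement.

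The main obstacle is the careful second-order expansion of the products \(\binom{\widetilde N}{2}\) and \(\widetilde N E\). One must check that every term not involving \(\Delta_{ef}\) is uniformly bounded (boundedness of \(\delta_e,\delta_f\)), and locate the random factor \(\Delta_{ef}\) exactly on \(A_w\). A subtle point is that the evaluations at the \(\pm\) configurations of \(e,f\) change \(E\) and \(\widetilde N\) only by \(O(1)\), so the independence of \(A_w\) from arcs not at \(w\) can still be used.
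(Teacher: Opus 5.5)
Your proposal is correct and takes essentially the same route as the paper. Both arguments use the deterministic second difference $pq$ of $\binom{E_{n,p}}{2}$ and the vanishing of the second difference of $\widetilde N_{n,p}$ for disjoint arcs. Both then localize the only unbounded contribution, $pq\,\indicator_{A_w}(E_{n,p}+1)$, on the event that the common node has no further incident arcs, and bound its fourth moment using the independence of that event from the arcs not at $w$. Your version is slightly more careful: keeping the $+1$ in $\EE[E'^4]\lesssim n^8p^4+1$ makes the bound hold without the assumption $p\gg n^{-2}$, which the paper uses implicitly at this step.
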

\begin{proof}
The contribution coming from \(\binom{E_{n,p}}{2}\) is the same as in
Section~\ref{sec:CosmoCLT}. Since changing one arc changes \(E_{n,p}\) by
one,
\[
    \left|\D_f\D_e\binom{E_{n,p}}{2}\right|
    \leq pq .
\]

Next, we consider \(\widetilde N_{n,p}\).
As before, changing the state of an arc can
only affect the isolation status of its two endpoints. 
Thus
\(|\D_e\widetilde N_{n,p}|\leq 2\sqrt{pq}\). 
If \(e\cap f=\emptyset\), then
the two changes concern disjoint sets of endpoints, and
\(\D_f\D_e\widetilde N_{n,p}=0\).
If \(e\cap f=\{x\}\), then a second-order
contribution can only occur when the common endpoint \(x\) becomes isolated
after the arcs \(e\) and \(f\) have been removed.
We write
\(\deg_{G-e-f}(x)\) for the degree of \(x\) in the graph obtained from
\(G_{n,p}\) by deleting \(e\) and \(f\). Then
\[
    |\D_f\D_e\widetilde N_{n,p}|
    \lesssim
    pq\,\indicator_{\{\deg_{G-e-f}(x)=0\}} .
\]

We now apply the discrete gradient \(\D_f\) to the three terms on the right-hand side of \eqref{eq:DeNonIsolated} and use the estimate
\eqref{eq:ProductEstimate} for each product. Since $|\D_e\widetilde N_{n,p}|\lesssim \sqrt{pq}$, $|\D_f\widetilde N_{n,p}|\lesssim \sqrt{pq}$ and
$|\D_f\D_e\widetilde N_{n,p}|\lesssim pq\,\indicator_{\{\deg_{G-e-f}(x)=0\}}$
in the intersecting case, while it is zero in the disjoint case, we obtain
\[
    \left|\D_f\D_e\binom{\widetilde N_{n,p}}{2}\right|
    \lesssim
    pq
    +
    \widetilde N_{n,p}\,|\D_f\D_e\widetilde N_{n,p}|.
\]
Therefore
\[
    \left|\D_f\D_e\binom{\widetilde N_{n,p}}{2}\right|
    \lesssim
    \begin{cases}
        pq, &\text{if } e\cap f=\emptyset,\\[0.15cm]
        pq\,(1+\widetilde N_{n,p}\indicator_{\{\deg_{G-e-f}(x)=0\}}),
        &\text{if } e\cap f=\{x\}.
    \end{cases}
\]
Similarly, for the mixed term
\(\widetilde N_{n,p}E_{n,p}\) we obtain
\[
    |\D_f\D_e(\widetilde N_{n,p}E_{n,p})|
    \lesssim
    \begin{cases}
        pq, &\text{if } e\cap f=\emptyset,\\[0.15cm]
        pq\,(1+E_{n,p}\indicator_{\{\deg_{G-e-f}(x)=0\}}),
        &\text{if } e\cap f=\{x\}.
    \end{cases}
\]
Since \(\widetilde N_{n,p}\leq 2E_{n,p}\), the three contributions in
\eqref{eq:CosmoTriangKRepresentation} imply
\[
    |\D_f\D_eK_{n,p}|
    \lesssim
    \begin{cases}
        pq, &\text{if } e\cap f=\emptyset,\\[0.15cm]
        pq\,(1+E_{n,p}\indicator_{\{\deg_{G-e-f}(x)=0\}}),
        &\text{if } e\cap f=\{x\}.
    \end{cases}
\]
Taking fourth powers gives the asserted bound in the disjoint case. In the
intersecting case, it remains to control the exceptional event. On
\(\{\deg_{G-e-f}(x)=0\}\), all arcs incident to \(x\), except possibly
\(e\) and \(f\), are absent. The remaining arcs are independent of this
event and are distributed as an Erd\H{o}s--R\'enyi graph on \(n-1\) nodes,
up to the two fixed arcs. Hence
\[
    \EE\!\left[
        \indicator_{\{\deg_{G-e-f}(x)=0\}}E_{n,p}^4
    \right]
    \lesssim
    q^{n-3}n^8p^4 .
\]
Consequently, whenever \(e\cap f=\{x\}\),
\[
    \EE\bigl[(\D_f\D_eK_{n,p})^4\bigr]
    \lesssim
    p^4q^4+n^8p^8q^{n+1}.
\]
This proves the lemma.
\end{proof}

We are now ready to prove the normal approximation bound.

\begin{proof}[Proof of Theorem~\ref{thm:CosmoTriangMain}~\ref{thm:CosmoTriangMain:clt}]
Set
\[
    F_{n,p}
    \coloneqq
    \frac{K_{n,p}-\EE[K_{n,p}]}{\sqrt{\VV(K_{n,p})}}.
\]
We apply Proposition~\ref{prop:2ndOrderPoincare} to \(F_{n,p}\). By
Lemma~\ref{lem:CosmoTriangVarianceLowerBound},
\[
    \VV(K_{n,p})
    \gtrsim
    n^6p^3q+n^5p^2q^{n-1}+n^3pq^{n-1}.
\]
Moreover, by Lemmas~\ref{lem:CosmoTriangFirstGradient} and
\ref{lem:CosmoTriangSecondGradient}, and by Hölder's inequality,
\[
    \EE\bigl[(\D_eF_{n,p})^4\bigr]
    \lesssim
    \frac{n^8p^6q^2}{\VV(K_{n,p})^2}\qquad\text{and}\qquad
    \EE\bigl[(\D_f\D_eF_{n,p})^4\bigr]
    \lesssim
    \frac{p^4q^4+\indicator_{\{|e\cap f|=1\}}n^8p^8q^{n+1}}
    {\VV(K_{n,p})^2}
\]
for all \(e,f\in\binom{[n]}{2}\).
Substitution into the quantities \(B_1,\ldots,B_5\) from
Proposition~\ref{prop:2ndOrderPoincare}, with the sums split according to
whether the corresponding arcs are disjoint or share one endpoint, gives
\[
\begin{aligned}
B_1
&\lesssim
\frac{
    n^{10}p^5q^3
    + n^{11}p^6q^{(n+1)/4+2}
    + n^{12}p^7q^{(n+1)/2+1}
}{\VV(K_{n,p})^2},                                      \\
B_2
&\lesssim
\frac{
    n^6p^3q^3
    + n^9p^5q^{(n+1)/2+1}
    + n^{12}p^7q^n
}{\VV(K_{n,p})^2},                                      \\
B_3
&\lesssim
\frac{n^{10}p^5q}{\VV(K_{n,p})^2},\qquad
B_4
\lesssim
\frac{
    n^8p^4q^2
    + n^{11}p^6q^{(n+1)/2}
}{\VV(K_{n,p})^2},                                      \\
B_5
&\lesssim
\frac{
    n^4p^2q^2
    + n^{11}p^6q^{n-1}
}{\VV(K_{n,p})^2}.
\end{aligned}
\]
The additional terms involving powers of \(q^n\) do not dominate the
corresponding contribution in \(B_3\). Indeed, if \(p\gg n^{-1}\), these
terms are exponentially small. If \(p\lesssim n^{-1}\), then \(q^n\) is bounded from below by a positive
constant, and the additional polynomial factors are still dominated by
\(n^{10}p^5q\) in the range \(p\gg n^{-2}\). Thus
\[
    B_1+\ldots+B_5
    \lesssim
    \frac{n^{10}p^5q}{\VV(K_{n,p})^2} \lesssim
    \frac{1}{n^2pq}.
\]
Proposition~\ref{prop:2ndOrderPoincare} therefore yields
\[
    d_K(F_{n,p},N)
    \lesssim
    \frac{1}{n\sqrt{pq}}
    =
    \frac{1}{n\sqrt{p(1-p)}},
\]
and this proves Theorem~\ref{thm:CosmoTriangMain}~\ref{thm:CosmoTriangMain:clt}.
\end{proof}

\subsubsection*{Acknowledgement}
This work has been supported by the German Research Foundation (DFG) via SPP 2458 \textit{Combinatorial Synergies} and via project number 547295909.

\bibliographystyle{plain}  
\bibliography{quellen}   

\end{document}